\newcommand{\abs}[1]{\left\lvert{#1}\right\rvert}
\newcommand{\norm}[1]{\left\|{#1}\right\|}
\newcommand{\gen}[1]{\left\langle{#1}\right\rangle}
\DeclareMathOperator{\pr}{\rm{pr}}
\DeclareMathOperator{\inter}{\rm{int}}
\DeclareMathOperator{\bd}{\partial}
\DeclareMathOperator{\cl}{cl}
\DeclareMathOperator{\per}{\rm{Per}}
\DeclareMathOperator{\diff}{\rm{Diff}}
\DeclareMathOperator{\IFS}{\rm{IFS}}
\newcommand{\mc}{\mathcal}
\newcommand{\ol}{\overline}
\newcommand{\til}{\tilde}
\newcommand{\R}{\mathbb{R}}\newcommand{\N}{\mathbb{N}}
\newcommand{\Z}{\mathbb{Z}}
\newcommand{\T}{\mathbb{T}}
\newcommand{\sm}{\setminus}
\newcommand{\cf}{c.f.\ }
\newcommand{\ie}{i.e.\ }
\newcommand{\eg}{e.g.\ }
\newtheorem{theorem}{Theorem}
\newtheorem{corollary}[theorem]{Corollary}
\newtheorem{lemma}[theorem]{Lemma}
\newtheorem{proposition}[theorem]{Proposition}
\newtheorem{question}[theorem]{Question}
\newtheorem*{theorem*}{Theorem}
\theoremstyle{definition}
\newtheorem{definition}[theorem]{Definition}
\theoremstyle{remark}
\newtheorem{remark}[theorem]{Remark}
\title[Transitivity of iterated function systems]{Transitivity of generic semigroups of area-preserving surface diffeomorphisms}
\author{Andres Koropecki}
\address{Universidade Federal Fluminense, Instituto de Matem\'atica, Rua M\'ario Santos Braga S/N, 24020-140 Niteroi, RJ, Brasil}
\email{koro@mat.uff.br}
\author{Meysam Nassiri}
\address{Instituto Nacional de Matem\'atica Pura e Aplicada, Estrada
  Dona Castorina, 110. 22460-320, Rio de Janeiro, Brasil. }
\address{Current address: Department of Mathematics, Institute for Advanced Studies in Basic Sciences (IASBS)
P. O. Box 45195-1159 Zanjan, Iran}
\email{nassiri@impa.br}
\thanks{The authors were partially supported by CNPq-Brasil}
\begin{document}
\begin{abstract}
We prove that the action of the semigroup generated by a $C^r$ generic pair of area-preserving diffeomorphisms of a compact orientable surface is transitive.
\end{abstract}

\maketitle

\section{Introduction} \label{sec:intro}

In this article we consider a compact orientable surface $S$ with a smooth volume form $\omega$, and we denote by $\diff^r_\omega(S)$ the space of $C^r$ diffeomorphisms from $S$ to itself that leave $\omega$ invariant (\ie such that $f^*(\omega)=\omega$), endowed with the $C^r$ topology. 
Our main result is the following

\begin{theorem}\label{th:main} There is a residual set $\mc{R}\subset \diff^r_\omega(S)\times \diff^r_\omega(S)$ $(r\in \N \cup \{\infty\})$ with the product $C^r$ topology such that if $(f,g)\in \mc{R}$ then the iterated function system $\IFS(f,g)$ is transitive.
\end{theorem}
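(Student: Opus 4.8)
The plan is to realize $\mc{R}$ as a countable intersection of open dense sets and to prove density by a perturbation argument driven by the rotational dynamics at elliptic periodic points. Fix a countable basis $\{U_n\}$ of nonempty open subsets of $S$, and for a word $w$ in the two generators write $w$ also for the corresponding composition. For indices $m,n$ set
$$\mc{R}_{m,n}=\{(f,g): w(U_m)\cap U_n\neq\emptyset \text{ for some word } w\}.$$
Since compositions depend continuously on $(f,g)$ in the $C^r$ topology and the condition $w(U_m)\cap U_n\neq\emptyset$ is open, each $\mc{R}_{m,n}$ is open, and $\IFS(f,g)$ is transitive exactly when $(f,g)\in\bigcap_{m,n}\mc{R}_{m,n}$. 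It therefore suffices to show each $\mc{R}_{m,n}$ is dense, and then $\mc{R}=\bigcap_{m,n}\mc{R}_{m,n}$ is the desired residual set.

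The analytic core is the density statement: given $(f,g)$, open sets $U,V$, and $\varepsilon>0$, I must produce a $C^r$ $\varepsilon$-perturbation $(\til f,\til g)$ whose IFS carries a point of $U$ into $V$. I would phrase this through the reachable set $R(U)=\bigcup_w w(U)$, which is open and forward invariant ($\til f(R)\subset R$ and $\til g(R)\subset R$); transitivity for the pair $(U,V)$ fails precisely when $R(U)$ misses $V$. In that case $K=\cl\,R(U)$ is a proper closed forward-invariant set with nonempty interior, and, because $\omega$ is an invariant measure of full support and the maps are homeomorphisms, the equalities $\mu(\til f(R))=\mu(R)$ force $R(U)$ to be essentially invariant; thus its boundary $\bd R(U)$ is an essentially invariant \emph{wall}. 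The goal of the perturbation is to breach this wall.

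The key local ingredient is a spreading lemma at elliptic periodic points. Using the conservative closing/general density theorem together with the Birkhoff normal form, I would first perturb so that one generator has an elliptic periodic point $p$ lying on $\bd R(U)$, with nondegenerate rotation on a small invariant disk $D$ around $p$. The rotational return map moves points near $p$ through all angular directions, so the reachable points of $R(U)$ that accumulate on $p$ from the inside are rotated across $\bd R(U)$ to the outside, contradicting invariance of the wall. To make this robust I would add a further small perturbation of the second generator, transverse to the rotation, so that the composed (bracket-type) action near $p$ is two-dimensionally controllable and the IFS-orbit of any disk meeting $D$ spreads over a full neighborhood of $p$. Since elliptic periodic points can be created densely by arbitrarily small conservative perturbations, such a $p$ can be placed on $\bd R(U)$, and the resulting breach shows $R(U)$ could not have been trapped, giving density of $\mc{R}_{m,n}$.

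The main obstacle I anticipate is exactly this local spreading lemma: arranging, with a single $C^r$-small and area-preserving perturbation supported in a disk, that the two-generator action near an elliptic point is genuinely controllable so that reachable sets are open and spread, while simultaneously controlling the rotation number through normal forms. Secondary difficulties are the global chaining—propagating reachability all the way from $U$ to $V$, which I would handle by iterating the wall-breaching step along a chain of densely available elliptic islands—and checking that creating the elliptic point on $\bd R(U)$ does not itself disturb the configuration one is arguing about.
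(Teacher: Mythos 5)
Your reduction to showing that each
$\mc{R}_{m,n}=\{(f,g): w(U_m)\cap U_n\neq\emptyset \text{ for some word } w\}$
is open and dense is sound (modulo the equivalence between the open-sets formulation and the existence of a dense orbit branch, which does require the invariant measure, as in Lemma~\ref{lem:trans-ifs}), and it matches the paper's first step. The gap is in the density argument, which is the entire content of the theorem, and the mechanism you propose for it would not work. First, placing an elliptic periodic point on $\bd R(U)$ by a $C^r$-small conservative perturbation requires a closing/general-density theorem, which is available only in the $C^1$ topology; for $r\geq 2$ (and the theorem is claimed for all $r$, with the interesting case being $r$ large) no such tool exists, and indeed the paper emphasizes that the Bonatti--Crovisier $C^1$ argument is exactly what fails here. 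Second, even granting such a point, your ``wall-breaching'' step points in the wrong direction: a Moser-stable elliptic point comes with arbitrarily small invariant circles on which the return map is an irrational rotation, and these circles are themselves robust walls (this is the KAM obstruction that makes a single generic $C^r$ map non-transitive). The argument that an elliptic point cannot lie on the boundary of a periodic connected open set is Mather's, and it appears in the paper (Proposition~\ref{pro:homo}); but its upshot is that \emph{generically the wall contains no periodic points at all}, so the obstruction you must destroy is an aperiodic invariant annular continuum (a KAM-type circle), and no local argument at a periodic point can reach it.

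What is missing is the actual use of the second generator. The paper's route is: (i) by Theorem~\ref{th:continuo} and the Mather-type analysis, for a Moser generic $f$ the relevant obstructions are aperiodic ``frontiers,'' and these form a pairwise disjoint, totally ordered family inside each annulus (Lemma~\ref{lem:generic-disjoint}); (ii) transitivity of $\IFS(f,g)$ follows once $f$ and $g$ share no periodic frontier (Lemma~\ref{lem:transitive}); (iii) one then separates the frontier family of $f$ from that of $g$ by conjugating $g$ with a small Hamiltonian shear, using Moeckel's observation that a linearly ordered subset of $\R^3$ has upper box dimension at most $1$, so a generic small translation makes the two families disjoint (Lemma~\ref{lem:separa-cont}). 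Your proposal contains no substitute for step (iii) --- the second generator enters only as a vague ``perturbation transverse to the rotation'' --- and no substitute for the structural results in (i) that identify and organize the obstructions. As written, the proof does not go through.
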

Saying that $\IFS(f_0,f_1)$ is transitive means that there is a point $x$ and a sequence $\{i_k\}$ of $1$'s and $0$'s such that $$\{f_{i_k}f_{i_{k-1}}\cdots f_{i_1}(x) : k\in \N\}$$ is dense in $S$, and it is equivalent to say that the action of the semigroup generated by $\{f_0,f_1\}$ is transitive. 

Theorem \ref{th:main} generalizes a result of Moeckel \cite{moeckel}, where he proves drift in the annulus for $\IFS(f,g)$, where $f$ is a monotone twist map and $g$ is a generic diffeomorphism. This is also related to the work of Le Calvez \cite{lecalvez-drift}. The most important difference in our case is that we do not require the twist condition, and combining the drift in annular invariant regions with generic arguments, we are able to obtain transitivity in the whole surface.

The dynamics of groups or semigroups of diffeomorphisms, besides its own
importance, is a useful tool for understanding the dynamics of certain
(single) diffeomorphisms. In fact, the topic of this paper is motivated by
several open problems in conservative and Hamiltonian dynamics.
One of them is the instability problem (or the so-called Arnold diffusion)
in Hamiltonian dynamics.  It has been conjectured \cite{arnold} that for $C^r$
generic perturbations of a high-dimensional integrable Hamiltonian system
(or symplectomorphism), many orbits drift between KAM invariant tori. This
problem remains open in its full generality, although many partial (but deep)
results have been obtained in the last decades (\cf \cite{CY, DLS, Ma, MS, xia2} and references there).

Another important problem is the topological version of the Pugh-Shub
conjecture \cite{pugh-shub}, which says that $C^r$ generic partially hyperbolic symplectic (or conservative) diffeomorphisms of a compact manifold are robustly transitive.

An approach to deal with these problems is using additional structure (\eg partial hyperbolicity of certain sets, skew-product structure, invariant foliations) to obtain an associated iterated function system which reflects certain properties of the original system (\cf \cite{moeckel, tesis-meysam, np}). In that setting, results like Theorem \ref{th:main} can be helpful to obtain transitivity or drift in the original system.

For instance, for symplectic diffeomorphisms which are the product of an Anosov diffeomorphism and a conservative surface diffeomorphism close to an integrable system it is possible to prove (using Theorem \ref{th:main}) that $C^r$-generic (symplectic) perturbations are transitive. Such applications of Theorem \ref{th:main} on transitivity of certain partially
hyperbolic sets or diffeomorphisms  will be treated elsewhere.

We remark that these problems have different nature in the $C^1$ topology. Bonatti and Crovisier \cite{bonatti-crovisier}, using a $C^1$ closing lemma, proved that $C^1$ generic conservative diffeomorphisms are transitive (see also \cite{abc}).
However, if we consider surface diffeomorphisms, this result is clearly false if we use the $C^r$ topology with $r$ sufficiently large, due to the KAM phenomenon. In particular, the number of diffeomorphisms in Theorem \ref{th:main} is optimal if $r$ is large (for instance, $r\geq 16$ \cite{douady}), in the sense that just one generic diffeomorphism is generally not transitive.

The proof of Theorem \ref{th:main} relies on the following result, which seems interesting by itself: for a $C^r$-generic $f$ ($r\geq 16$), the invariant \emph{frontiers} of $f$ are pairwise disjoint (see \S\ref{sec:frontiers} for a precise definition of frontier and Lemma \ref{lem:generic-disjoint} for a precise statement). The proof of this result is strictly two-dimensional, and that  restricts our main theorem to dimension two. However, we conjecture that the conclusion of Theorem \ref{th:main} also holds in higher dimensions. 

The following relevant questions were motivated by the present work.
\begin{question} For a $C^r$-generic pair of area-preserving diffeomorphisms, does any of the following properties hold?
\begin{enumerate}
\item $\IFS(f,g)$ is robustly transitive (\ie if for small perturbations of $f$ and $g$ the iterated function system is still transitive);
\item $\IFS(f,g)$ is ergodic (\ie $\{f,g\}$-invariant sets have measure $0$ or $1$);
\item $\IFS(f,g)$ is minimal (\ie there are no compact $\{f,g\}$-invariant sets other than the whole surface and the empty set).
\end{enumerate}
\end{question}

Let us say a few words about the proof of Theorem \ref{th:main}. We divide it in several sections. In \S\ref{sec:ifs} we provide several definitions of transitivity of group actions and iterated function systems, and we prove that they are all equivalent in the area-preserving setting. This simplifies the proof of Theorem \ref{th:main}. 
In \S\ref{sec:koro} we state a result from \cite{koro} (\cf Theorem \ref{th:continuo}) which gives a good description of aperiodic invariant continua in the area-preserving setting , and plays a crucial role in the proof of our main theorem. When the surface is $\T^2$ or $S^2$, it is possible to prove the theorem without these results, using additional arguments, but \cite{koro} allows to unify these arguments to prove the theorem for arbitrary surfaces.

In \S\ref{sec:generic} we introduce generic conditions and we state a result of Mather which relates open invariant sets with aperiodic invariant continua.
The main idea in the proof of Theorem \ref{th:main} is to reduce the problem to proving that for generic $(f,g)$, there are no ``nice annular continua'' periodic by both $f$ and $g$. We call these nice continua \emph{frontiers}; in \S\ref{sec:frontiers} we define them and prove some elementary facts about these sets. 
In \S\ref{sec:nondense} we prove that $\IFS(f,g)$ is transitive if there are no frontiers which are periodic for both $f$ and $g$ simultaneously. Using Theorem \ref{th:continuo}, we prove that all periodic frontiers of $f$ are pairwise disjoint if $f$ satisfies certain generic properties. 
Taking both $f$ and $g$ generic we are left with the problem of separating, by means of a perturbation of $g$, the family of periodic frontiers of $f$ from the corresponding family of $g$. To do this, we extend to our setting the arguments used by Moeckel in \cite{moeckel} where he proves a similar result for twist maps. This is done in \S\ref{sec:moeckel}. Finally, in \S\ref{sec:proof} we complete the proof of Theorem \ref{th:main}.

\subsection*{Acknowledgments}
We are grateful to J. Franks, P. Le Calvez, and E. Pujals for insightful discussions. We also thank the anonymous referee for the corrections and suggestions.

\section{Transitivity of group actions and iterated function systems}
\label{sec:ifs}

Throughout this section we assume that $M$ is an arbitrary manifold, not necessarily compact, and $\mc{F}$ is a family of homeomorphisms from $M$ to itself. We denote by $\gen{\mc{F}}^+$ and $\gen{\mc{F}}$ the semigroup and the group generated by $\mc{F}$, respectively. For $x\in M$, we write the orbits of the actions of this semigroup and group as
$$\gen{\mc{F}}^+(x) = \{f(x): f\in \gen{\mc{F}}^+\}\quad \text{ and }\quad \gen{\mc{F}}(x) = \{f(x) : f\in \gen{\mc{F}}\}.$$ 
We say that a set $E\subset M$ is $\mc{F}$-invariant if $f(E)=E$ for each $f\in \mc{F}$. In this case, it is clear that $f(E)=E$ for each $f\in \gen{\mc{F}}$.
If there is a point $x\in M$ such that $\gen{\mc{F}}(x)$ is dense in $M$, we say that $\gen{\mc{F}}$ is transitive. Similarly, if $\gen{\mc{F}}^+(x)$ is dense for some $x$ we say that $\gen{\mc{F}}^+$ is transitive.

A sequence $\{x_n : n\in \N \text{ (resp. $n\in \Z$})\}$ is called a branch (resp. full branch) of an orbit of $\IFS(\mc{F})$ if for each $n\in \N$ (resp. $n\in \Z$) there is $f_n\in \mc{F}$ such that $x_{n+1}=f_n(x_n)$. Here, $\IFS(\mc{F})$ stands for the iterated function system associated to $\mc{F}$. We say that $\IFS(\mc{F})$ is transitive if there is a branch of an orbit which is dense in $M$. 

The following lemmas show that if the maps in question preserve a 
finite measure with total support, then all the different notions of transitivity are equivalent.

\begin{lemma} Let $\mu$ be a finite Borel measure on $M$ with total support, and let $\mc{F}$ be a family of homeomorphisms from $M$, all of which leave $\mu$ invariant. Let $U$ and $V$ be open subsets of $M$ such that $f(U)\cap V\neq \emptyset$ for some $f\in \gen{\mc{F}}$. Then there is $\hat{f}\in \gen{\mc{F}}^+$ such that $\hat{f}(U)\cap V\neq \emptyset$.
\end{lemma}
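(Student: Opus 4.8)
The plan is to reduce an arbitrary element $f$ of the group $\gen{\mc{F}}$ to a \emph{positive} word by repeatedly trading inverse generators for forward iterates, using Poincar\'e recurrence. The two hypotheses enter exactly here: finiteness of $\mu$ gives recurrence, and total support guarantees that every nonempty open set has positive measure, so recurrence is available on any such set.

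First I would isolate the following key step. \emph{Claim:} if $g\in\mc{F}$ and $A,B\subset M$ are open with $g^{-1}(A)\cap B\neq\emptyset$, then there is an integer $m\ge 0$ with $g^{m}(A)\cap B\neq\emptyset$. To prove it, set $C=g^{-1}(A)\cap B$, which is open and nonempty, hence $\mu(C)>0$. By Poincar\'e recurrence for the $\mu$-preserving homeomorphism $g$ on the finite measure space $(M,\mu)$, there exist a point $x\in C$ and an integer $n\ge 1$ with $g^{n}(x)\in C$. Since $x\in g^{-1}(A)$ we have $g(x)\in A$, so $g^{n}(x)=g^{n-1}(g(x))\in g^{n-1}(A)$; and since $g^{n}(x)\in C\subset B$, the point $g^{n}(x)$ lies in $g^{n-1}(A)\cap B$. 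Taking $m=n-1\ge 0$ proves the claim.

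Next I would run an induction on the number of inverse letters in a word representing $f$. Write $f=g_{k}^{\epsilon_{k}}\cdots g_{1}^{\epsilon_{1}}$ with $g_{i}\in\mc{F}$ and $\epsilon_{i}\in\{\pm1\}$. If no $\epsilon_{i}$ equals $-1$, then $f\in\gen{\mc{F}}^{+}$ and there is nothing to do. Otherwise let $g^{-1}$ be the leftmost inverse letter and write $f=p\,g^{-1}\,q$, where the prefix $p$ is a (possibly empty) positive word and $q$ is the remaining suffix. Applying the homeomorphism $p^{-1}$, the condition $f(U)\cap V\neq\emptyset$ is equivalent to $g^{-1}\bigl(q(U)\bigr)\cap p^{-1}(V)\neq\emptyset$. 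The claim, applied with $A=q(U)$ and $B=p^{-1}(V)$ (both open), yields $m\ge 0$ with $g^{m}\bigl(q(U)\bigr)\cap p^{-1}(V)\neq\emptyset$, that is, $\bigl(p\,g^{m}\,q\bigr)(U)\cap V\neq\emptyset$. Since $p$ and $g^{m}$ contribute no inverse letters, the word $p\,g^{m}\,q$ has exactly one fewer inverse letter than $f$, so the induction hypothesis produces the desired $\hat f\in\gen{\mc{F}}^{+}$.

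I expect the only real subtlety to be the degenerate endpoint of the induction, when the reduction terminates in the empty word; since each step preserves the property ``(current word)$(U)\cap V\neq\emptyset$'', this occurs precisely when $U\cap V\neq\emptyset$. In that case I would pick any generator $g\in\mc{F}$ and apply Poincar\'e recurrence to the nonempty open set $W=U\cap V$ to find $n\ge 1$ with $g^{n}(W)\cap W\neq\emptyset$; as $g^{n}(W)\subset g^{n}(U)$ and $W\subset V$, this gives $g^{n}(U)\cap V\neq\emptyset$, so $\hat f=g^{n}$ works. The conceptual heart of the argument, and the place where the measure-theoretic hypotheses are indispensable, is the recurrence step inside the claim; everything else is the bookkeeping of the inductive reduction.
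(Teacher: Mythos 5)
Your proof is correct and rests on exactly the same idea as the paper's: use Poincar\'e recurrence (which needs only finiteness of $\mu$ and total support) to trade each inverse generator for a nonnegative power of that generator, then induct over the word. The only differences are bookkeeping --- you induct on the number of inverse letters and patch the degenerate case $m=0$ at the end, while the paper inducts on word length and arranges the return time $k\geq 2$ so the resulting positive word is never empty --- so this is essentially the paper's argument.
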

\begin{proof}
Any $f\in \gen{\mc{F}}$ has the form $f=f_nf_{n-1}\cdots f_1$ for some $f_i$ such that one of $f_i$ or $f_i^{-1}$ belong to $\mc{F}$. We prove the result by induction on $n$. If $n=1$, then either $f\in \mc{F}\subset \gen{\mc{F}}^+$ (and there is nothing to do) or $f^{-1}\in \mc{F}$. In this case, since $f^{-1}$ preserves $\mu$ and the open set $f(U)\cap V$ has positive measure, there is $k\geq 2$ such that $$f^{-k}(f(U)\cap V)\cap f(U)\cap V\neq \emptyset.$$
In particular, letting $\hat{f}= f^{-k+1}$, we have that $\hat{f}\in \gen{\mc{F}}^+$ and $\hat{f}(U)\cap V \neq \emptyset$ as required.

Now suppose the proposition holds for some fixed $n\geq 1$, and suppose $f(U)\cap V\neq \emptyset$ where $f=f_{n+1}f_n\cdots f_1$ and $f_i\in \mc{F}$ or $f_i^{-1}\in \mc{F}$ for each $i$. Then $f_n\cdots f_1(U)\cap f_{n+1}^{-1}(V)\neq \emptyset$. By induction hypothesis (with $f_{n+1}^{-1}(V)$ instead of $V$) we find that there is $g\in \gen{\mc{F}}^+$ such that $g(U)\cap f_{n+1}^{-1}(V)\neq \emptyset$. Thus, $f_{n+1}(g(U))\cap V\neq \emptyset$. Applying the case $n=1$ with $g(U)$ instead of $U$, we see that there is $h\in \gen{\mc{F}}^+$ such that $h(g(U))\cap V\neq \emptyset$. Hence, $\hat{f}=hg\in\gen{\mc{F}}^+$ satisfies the required condition.
\end{proof}

We will only use $(4)\implies (8)$ from the next lemma, but we state the other equivalences for the sake of completeness.
\begin{lemma} \label{lem:trans-ifs} If $\mc{F}$ is a \emph{countable} family of homeomorphisms from $M$ to itself preserving a finite Borel measure with compact support, then the following are equivalent:
\begin{enumerate}
\item $\gen{\mc{F}}$ is transitive;
\item For any pair of nonempty open sets $U$ and $V$ of $M$, there is $f\in \gen{\mc{F}}$ such that $f(U)\cap V\neq \emptyset$;
\item There is a residual set $\mc{R}\subset M$ such that $\cl{\gen{\mc{F}}(x)}=M$ for each $x\in \mc{R}$;
\item $\gen{\mc{F}}^+$ is transitive;
\item For any pair of nonempty open sets $U$ and $V$ of $M$, there is $f\in \gen{\mc{F}}^+$ such that $f(U)\cap V\neq \emptyset$;
\item There is a residual set $\mc{R}^+\subset M$ such that $\cl{\gen{\mc{F}}^+(x)}=M$ for each $x\in \mc{R}^+$;
\item There is a full branch of an orbit of $\IFS(\mc{F})$ which is dense in $M$;
\item $\IFS(\mc{F})$ is transtive;
\item There is a residual subset $\mc{R}'\subset M$ such that for each $x\in \mc{R}'$ there is a branch of an orbit of $\IFS(\mc{F})$ starting at $x$ which is dense in $M$.
\end{enumerate}
\end{lemma}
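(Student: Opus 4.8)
The plan is to prove the nine equivalences in three layers — purely topological Birkhoff-type arguments inside the group, inside the semigroup, and inside the IFS — linked by the single measure-theoretic input supplied by the preceding lemma. Throughout I use that $M$, being a manifold, is second countable and a Baire space, and that $\gen{\mc{F}}$ and $\gen{\mc{F}}^+$ are countable because $\mc{F}$ is; fix once and for all a countable base $\{V_n\}_{n\in\N}$ of nonempty open sets.

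First layer (group). I would establish $(1)\Leftrightarrow(2)\Leftrightarrow(3)$ by the standard transitivity theorem. For $(2)\Rightarrow(3)$, each $W_n:=\bigcup_{f\in\gen{\mc{F}}}f^{-1}(V_n)$ is open and, by $(2)$, dense, so $\mc{R}:=\bigcap_n W_n$ is residual and $\cl{\gen{\mc{F}}(x)}=M$ for every $x\in\mc{R}$. The implication $(3)\Rightarrow(1)$ is Baire (a residual set is nonempty), and $(1)\Rightarrow(2)$ is immediate once one notes that for a \emph{group} the orbit of any point lying in a dense orbit is again dense, so one can first steer into $U$ and then into $V$.

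Second and bridging layer (semigroup). The same Baire argument with $\gen{\mc{F}}$ replaced by $\gen{\mc{F}}^+$ yields $(5)\Rightarrow(6)\Rightarrow(4)$. Rather than attempting $(4)\Rightarrow(5)$ directly — which is exactly where a semigroup, lacking inverses, resists the naive argument — I would route through the group: $(4)\Rightarrow(1)$ holds because $\gen{\mc{F}}^+(x)\subseteq\gen{\mc{F}}(x)$, and $(2)\Rightarrow(5)$ is precisely the content of the preceding lemma (from $f(U)\cap V\neq\emptyset$ for some group element it produces $\hat f\in\gen{\mc{F}}^+$ with $\hat f(U)\cap V\neq\emptyset$). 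Since $(5)\Rightarrow(2)$ is trivial, the cycle $(4)\Rightarrow(1)\Leftrightarrow(2)\Rightarrow(5)\Rightarrow(6)\Rightarrow(4)$ closes, proving $(1)$--$(6)$ equivalent. This is the only place where the invariant measure is used.

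Third layer (IFS). The easy directions are $(9)\Rightarrow(8)$ (Baire), $(8)\Rightarrow(4)$ (the points of a dense branch from $x$ all lie in $\gen{\mc{F}}^+(x)$), $(8)\Rightarrow(7)$ (extend a dense forward branch backward arbitrarily, each generator being invertible), and $(7)\Rightarrow(1)$ (all points of a full branch lie in $\gen{\mc{F}}(x_0)$). The substantive step is $(6)\Rightarrow(9)$, producing a \emph{residual} set of starting points admitting a dense branch, and here countability is crucial: with $\mc{R}^+$ as in $(6)$, set $\mc{R}':=\mc{R}^+\cap\bigcap_{f\in\gen{\mc{F}}^+}f^{-1}(\mc{R}^+)$, a countable intersection of residual sets (preimages of the residual $\mc{R}^+$ under homeomorphisms), hence residual. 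For $x\in\mc{R}'$, both $x$ and \emph{all} its forward images have dense semigroup orbit, so one builds the branch inductively: pick $g_1\in\gen{\mc{F}}^+$ with $g_1(x)\in V_1$; since $g_1(x)\in\mc{R}^+$, pick $g_2$ with $g_2g_1(x)\in V_2$; and so on. Concatenating the generator words defining $g_1,g_2,\dots$ gives a single branch from $x$ meeting every $V_n$, hence dense. Combining $(6)\Rightarrow(9)\Rightarrow(8)\Rightarrow(4)$ and $(8)\Rightarrow(7)\Rightarrow(1)$ with the first block shows all nine statements are equivalent.

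I expect the main obstacle to be exactly this passage from ``the orbit is dense'' to ``a single ordered branch is dense,'' since a branch is one path in the orbit tree rather than the whole orbit; the resolution is the observation that for $x\in\mc{R}'$ every forward image again has dense orbit, which is what allows the targets $V_1,V_2,\dots$ to be chained into one branch. The only non-topological ingredient — and the reason the whole equivalence requires the measure — is the group-to-semigroup bridge $(2)\Rightarrow(5)$ provided by the preceding lemma.
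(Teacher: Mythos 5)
Your proposal is correct and follows essentially the same route as the paper: the measure enters only through the preceding lemma in the bridge $(2)\Rightarrow(5)$, and the substantive step is the passage from a residual set of dense semigroup orbits to a residual set of dense branches via an invariant residual set (your $\mc{R}'=\mc{R}^+\cap\bigcap_{f\in\gen{\mc{F}}^+}f^{-1}(\mc{R}^+)$ is the same device as the paper's $\bigcap_{f\in\gen{\mc{F}}}f(\mc{R})$) followed by chaining targets $V_1,V_2,\dots$ along one branch. Your write-up is merely more explicit about the remaining easy implications, which the paper dispatches with ``(9) implies all the others and all the others imply (2).''
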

\begin{proof}
Note that (2) $\implies$ (5) because of the previous lemma. To prove that (5) $\implies$ (9), consider a countable basis of open sets $\{V_n:n\in \N\}$ of $M$, and let $R_n$ be the set of all $x\in M$ such that $f(x)\in V_n$ for some $f\in \gen{\mc{F}^+}$. Clearly,  $R_n$  is open, and it is dense by (5). Thus $\mc{R}=\cap_n R_n$ is a residual set such that $\gen{\mc{F}}^+(x)$ is dense in $M$ for each $x\in \mc{R}$. Now 
$\mc{R}'=\bigcap_{f\in \gen{\mc{F}}} f(\mc{R})$
is a residual $\mc{F}$-invariant set, and for every $x\in \mc{R}'$ we can find a sequence $f_n\in \gen{\mc{F}}^+$ such that $f_n\cdots f_1(x)\in V_n$, so that $\{f_n\cdots f_1(x):n\in \N\}$ is a dense branch of an orbit starting at $x$.
It is clear that (9) implies all the other conditions, and all the other conditions imply $(2)$, so we are done.
\end{proof}

\section{Invariant continua}
\label{sec:koro}

By a continuum, we mean a compact connected set.

\begin{definition} \label{def:annular}
As in \cite{koro}, we say that a continuum $K$ is annular if it has an annular neighborhood $A$ such that $A\sm{K}$ has exactly two connected components, both homeomorphic to annuli. We call any such $A$ an $\emph{annular neighborhood}$ of $K$. 
\end{definition}

This definition is equivalent to saying that $K$ is the intersection of a sequence $\{A_i\}$ of closed topological annuli such that $A_{i+1}$ is an essential subset of $A_i$ (\ie it separates the two boundary components of $A_i$), for each $i\in \N$. 

Recall that $\Omega(f)$ denotes the nonwandering set of $f$, that is, the set of points $x\in S$ such that for each neighborhood $U$ of $x$ there is $n>0$ such that $f^n(U)\cap U\neq\emptyset$. We will need the following result:

\begin{theorem}[\cite{koro}] \label{th:continuo} Let $f\colon S\to S$ be a homeomorphism of a compact orientable surface such that $\Omega(f)=S$. If $K$ is an $f$-invariant continuum, then one of the following holds:
\begin{enumerate}
\item $f$ has a periodic point in $K$;
\item $K$ is annular;
\item $K=S=\T^2$ (a torus);
\end{enumerate}
\end{theorem}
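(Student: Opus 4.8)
The plan is to assume that $f$ has no periodic point in $K$ (otherwise we are immediately in case (1)) and then to split according to whether $K=S$ or $K\subsetneq S$. I may freely assume $f$ preserves orientation, since in the main case below I only invoke planar fixed-point theorems that hold for all homeomorphisms (Bell's orientation-free version of Cartwright--Littlewood), and the argument for $K=S$ is purely topological. First I would dispose of the case $K=S$: then $S\setminus K=\emptyset$, so $K$ cannot be annular, and it remains to force $S=\T^2$. This is where the global topology of $S$ enters, via the fact that a closed orientable surface other than $\T^2$ admits no periodic-point-free homeomorphism. For the $2$-sphere this follows from the Lefschetz fixed point theorem (the orientation-reversing case reduces to $f^2$, whose Lefschetz number is $2$), and for genus at least $2$ it follows from the Nielsen--Thurston classification, since a homeomorphism isotopic to a pseudo-Anosov, to a finite-order, or to a reducible map always has periodic points. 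Hence $\chi(S)=0$ and $S=\T^2$, which is case (3); the irrational rotation of $\T^2$ shows this alternative genuinely occurs.

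The heart of the matter is the case $K\subsetneq S$ with no periodic point in $K$, where I must manufacture the annular structure. Since $f(K)=K$, the map $f$ permutes the connected components of the open set $S\setminus K$. Only finitely many of these components can have positive genus (the total genus is bounded by that of $S$), so the remaining ones are planar. I would first fill in the inessential, disk-type complementary components in a canonical, $f$-invariant way; since filling a family of disks changes neither the periodic-point hypothesis nor whether the continuum is annular, this reduces the problem to understanding the essential complementary components.

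For each periodic complementary component $U$, say with $f^p(U)=U$, I would pass to the Carath\'eodory prime-end compactification and consider the induced circle homeomorphism on the circle of prime ends, which carries a well-defined rotation number. If this number is \emph{rational}, there is a periodic prime end; because its principal set is accessible and $\Omega(f)=S$, one upgrades this (through a Cartwright--Littlewood/Bell argument applied in a disk chart, a non-separating invariant subcontinuum of $\partial U$ producing a fixed point) to a genuine periodic point lying in $\partial U\subseteq K$, contradicting the standing assumption. Therefore every such rotation number is \emph{irrational}, the boundary dynamics are semiconjugate to an irrational rotation, and each periodic $U$ abuts $K$ along a single, circularly ordered prime-end circle, contributing exactly one annular side. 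Combined with the fact that any point of $K$ bordered by three or more sides, or any genus of $K$, would give a topologically distinguished feature permuted by $f$ among finitely many possibilities (hence a periodic point in $K$), this forces $K$ to separate a neighborhood into exactly two annular sides, i.e. to be the nested intersection of essential closed annuli. By the equivalent description in Definition \ref{def:annular}, $K$ is annular, which is case (2).

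The hard part will be the prime-end step, in two ways. First, I must convert a \emph{rational} prime-end rotation number into an actual periodic point of $f$ inside $K$, rather than merely a periodic prime end or an invariant non-separating subcontinuum of the boundary; this is exactly where the nonwandering hypothesis is indispensable, feeding the Cartwright--Littlewood/Bell fixed-point theorem after lifting to a disk chart. Second, I must rule out, for a possibly \emph{wild} continuum $K$, any configuration with three or more complementary sides or with positive genus, by making precise the notion of a ``canonical local feature'' whose finite $f$-orbit yields a periodic point. Controlling the accessibility of periodic prime ends and excluding these pathological local structures is the delicate core on which the whole dichotomy rests.
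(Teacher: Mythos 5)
First, note that the paper itself does not prove this statement: Theorem \ref{th:continuo} is imported from the reference \cite{koro}, so there is no in-paper argument to compare against. Your outline does follow the general strategy of that source (analysis of complementary components, prime ends and their rotation numbers, Cartwright--Littlewood/Bell fixed point theorems, the nonwandering hypothesis, and a finiteness/Euler-characteristic count), and your treatment of the case $K=S$ via Lefschetz and Nielsen--Thurston is correct. But as a proof the proposal has genuine gaps, concentrated exactly at the points you defer.

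(1) The reduction ``fill in the inessential, disk-type complementary components, since this changes neither the periodic-point hypothesis nor annularity'' is false on both counts. Every complementary component is periodic (Remark \ref{rem:wandering}), and a periodic open-disk component $U$ with $f^p(U)=U$ always contains a periodic point: $f^{2p}|_U$ is an orientation-preserving nonwandering homeomorphism of an open disk, hence has a fixed point by Brouwer's plane translation theorem. So the filled continuum is never aperiodic (for an irrational rotation of $S^2$ with $K$ the equator, filling the two disk components yields $S^2$, which contains the fixed poles). Moreover an annular continuum has at most two complementary components, so disk components cannot be discarded: excluding a third complementary component, disk or not, is part of what must be proved about $K$ itself. (2) The crux --- that a \emph{rational} prime-end rotation number forces a periodic point of $f$ in $\partial U\subset K$ when $\Omega(f)=S$ --- is asserted rather than proved. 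Cartwright--Littlewood/Bell produces a fixed point in an invariant \emph{non-separating} plane continuum, whereas $\partial U$ typically separates; extracting an invariant non-separating subcontinuum from a periodic prime end is precisely where the known counterexamples (which have wandering points) live, and the mechanism by which the nonwandering hypothesis defeats them is the substantive content you would need to supply. (3) ``A point of $K$ bordered by three or more sides, or any genus of $K$, gives a topologically distinguished feature permuted among finitely many possibilities, hence a periodic point in $K$'' is not an argument: finitely many permuted features yield a periodic \emph{feature}, not a periodic \emph{point} of $f$ in $K$ --- three invariant complementary components is exactly the configuration the theorem must rule out, and calling it distinguished does not rule it out. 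As written, the proposal is a plausible roadmap, but the three steps above, which you yourself flag as the hard part, are where the entire proof lives.
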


\begin{remark} If $f$ is area-preserving, then $\Omega(f)=S$, so the first hypothesis of the above theorem always holds.
\end{remark}

\begin{remark} Theorem \ref{th:continuo} implies that if $f$ is a $C^r$-generic area-preserving diffeomorphism and $K$ is an aperiodic invariant continuum (\ie one which contains no periodic points of $f$), then $K$ is annular. This follows from the well known fact that a $C^r$-generic surface diffeomorphism has periodic points (see for example \cite[Corollary 2]{zanata} and \cite[\S8]{xia-area}).
\end{remark}

\begin{remark}\label{rem:wandering} 
We will use the following observation several times: 
If $\Omega(f)=S$ (in particular, if $f$ is area-preserving) and $\{U_i\}_{i\in\N}$ is a family of pairwise disjoint open sets which are permuted by $f$ (\eg the connected components of the complement of a compact periodic set) then each $U_i$ is periodic for $f$.
\end{remark}

\section{Annular continua and frontiers}
\label{sec:frontiers}

The following definitions and observations will be important to obtain disjoint families of invariant continua in the next sections.

\begin{definition} \label{def:frontier} Let $K$ be an annular continuum with annular neighborhood $A$, and denote by $\bd A^-$ and $\bd A^+$ the two boundary components of $A$. Then  $A\sm K$  has exactly two (annular) components, which we denote by $K^-$ and $K^+$, where $K^-$ is the one whose closure contains $\bd^-A$. We denote by $\bd^-K$ and $\bd^+K$ the boundaries of $K^-$ and $K^+$ in $A$, respectively. If $K=\bd^-K=\bd^+K$, we say that $K$ is a \emph{frontier}.
\end{definition}

\begin{lemma} \label{lem:frontier} If $K$ is an annular continuum with empty interior, then $K$ contains a unique frontier. 
\end{lemma}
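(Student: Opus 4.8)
The plan is to realize the frontier as a canonical ``doubly accessible core'' of $K$ and to extract it using two-dimensional separation theory. First I would pass to the sphere: cap off the two boundary circles $\bd^-A,\bd^+A$ of the annular neighborhood $A$ with disks $D^-,D^+$ to obtain a sphere $\Sigma=D^-\cup A\cup D^+$ containing $K$. Since $A\sm K=K^-\sqcup K^+$ with $K^-,K^+$ annuli, gluing $D^-$ to $K^-$ and $D^+$ to $K^+$ shows that $\Sigma\sm K=U_-\sqcup U_+$ has exactly two components, each an open disk, with $D^-\subseteq U_-$, $D^+\subseteq U_+$ and $\bd U_\pm=\bd^\pm K$ (these are simply connected, being complementary domains of the continuum $K$ in $\Sigma$). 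I would then propose
$$G:=\ol{U_-}\cap\ol{U_+}$$
as the unique frontier contained in $K$. Note $G\subseteq K$, $G$ is compact, and $G$ has empty interior.

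For existence I would verify that $G$ is a frontier. Because $K$ has empty interior, every point of $K$ is a limit of $\Sigma\sm K=U_-\cup U_+$, so $\Sigma=\ol{U_-}\cup\ol{U_+}$; as $\Sigma$ is unicoherent and $\ol{U_\pm}$ are continua, their intersection $G$ is connected, hence a continuum. The same decomposition exhibits $\Sigma\sm G=(\ol{U_-}\sm G)\sqcup(\ol{U_+}\sm G)$ as a separation into two nonempty clopen pieces containing $D^-$ and $D^+$, so $G$ separates $D^-$ from $D^+$. Moreover, any component $O$ of $\Sigma\sm G$ containing neither cap must be disjoint from the connected sets $U_\pm$ (else it would contain one of them and its cap), hence $O\subseteq K\sm G$ is an open subset of $K$ and therefore empty. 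Thus $\Sigma\sm G$ has exactly two components, $W\supseteq U_-$ and $W'\supseteq U_+$, and since $W\cap G=\emptyset$ while $G\subseteq\ol{U_-}\subseteq\ol{W}$ (and symmetrically), one gets $\bd W=\bd W'=G$. Finally, taking $A_G$ to be $\Sigma$ with a small open disk removed from each of the disks $W$ and $W'$ produces a closed annulus with $G$ in its interior whose complement in $A_G$ consists of the two annuli $A_G\cap W$ and $A_G\cap W'$; this makes $G$ an annular continuum, and the full accessibility $G=\bd W=\bd W'$ translates into $G=\bd^-G=\bd^+G$. Hence $G$ is a frontier.

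For uniqueness, let $F\subseteq K$ be an arbitrary frontier. I would first note that $F$ is essential: writing $\Sigma\sm F=(\Sigma\sm K)\cup(K\sm F)$, the previous ``no-cap component is empty'' argument applies verbatim, and since an annular continuum has at least two complementary components, the caps lie in distinct components $W_-\supseteq U_-$ and $W_+\supseteq U_+$, which are the only two. As $F$ is a frontier, $\bd W_-=\bd W_+=F$, so $\ol{W_-}\cap\ol{W_+}=F$; combined with $\ol{U_\pm}\subseteq\ol{W_\pm}$ this gives $G=\ol{U_-}\cap\ol{U_+}\subseteq F$. Conversely, for $p\in F=\bd W_-$ every neighborhood of $p$ meets the open set $W_-$, and since $W_-\sm U_-\subseteq K$ is nowhere dense, each such neighborhood meets $U_-$; thus $p\in\ol{U_-}$, and symmetrically $p\in\ol{U_+}$, so $p\in G$. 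Therefore $F=G$.

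The main obstacle is the soft two-dimensional topology that underwrites the construction: that $\Sigma\sm K$ is exactly two open disks, the appeal to unicoherence of $\Sigma$ for connectedness of $G$, and the passage from ``common boundary of two complementary disks with empty interior'' to a genuine annular neighborhood in the sense of Definition~\ref{def:annular}. Every reduction of the form ``a complementary component containing no cap is empty'' leans on $K$ having empty interior, so keeping precise track of that hypothesis, and of the distinction between accessibility and the set-theoretic boundary, is exactly where care is required.
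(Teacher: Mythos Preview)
Your argument is correct, but it takes a different and heavier route than the paper's. The paper never leaves the annular neighborhood $A$: it simply sets $U_\pm=\inter\ol{K^\pm}$, observes that these are disjoint regular open sets whose union is dense in $A$ (because $\inter K=\emptyset$), and concludes $\bd U_+=\bd U_-=:\alpha$; uniqueness is a two-line computation showing that for any frontier $\beta\subset K$ one has $\beta^-=\inter\ol{\beta^-}=\inter\ol{K^-}=U_-$. Your construction is essentially the same object---indeed your $G=\ol{U_-}\cap\ol{U_+}$ equals $\ol{K^-}\cap\ol{K^+}$ in $A$, which coincides with $\bd(\inter\ol{K^+})$---but you obtain its connectedness via unicoherence of $S^2$ after capping off, and you recover its annularity from the fact that complementary domains of continua in $S^2$ are disks. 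This is more geometric and perhaps more illuminating about \emph{why} the frontier exists (it is the common boundary of the two global complementary disks), whereas the paper's approach is purely order-theoretic/point-set and correspondingly shorter. One small point you might tighten: when you assert ``taking $A_G$ to be $\Sigma$ with a small open disk removed from each of the disks $W$ and $W'$\,'' you are using that $W,W'$ are open disks, which follows from the connectedness of $G$ (complementary domains of a continuum in $S^2$ are simply connected); stating this explicitly would close the only implicit step.
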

\begin{proof} 
If $U_+=\inter\ol{K^+}$ and $U_-=\inter\ol{K^-}$, then $U_+\cup U_-$ is open and dense in $A$. Since $U_+$ and $U_-$ are disjoint and since $\inter \ol {U^+}=U^+$ (and similarly for $U^-$), it follows that $\bd U_+=\bd U_-$. Let  $\alpha=\bd U_+=\bd U_-$. It is clear that $\alpha$ is a frontier and $\alpha\subset K$. 

Now suppose that $\beta\subset K$ is a frontier. Since $K^-\subset \beta^-$ and  $K$ has empty interior, $\beta^-\sm K^-$ has empty interior  as well. Thus, $U_- = \inter\ol{K^-}=\inter\ol{\beta^-}=\beta^-$. Therefore, $\alpha=\bd U_-=\bd \beta^-= \beta$. This completes the proof.
\end{proof}

\begin{corollary} Let $f\colon S\to S$ be a homeomorphism and $K\subset S$ an $f$-invariant annular continuum with empty interior. Then the unique frontier $\hat{K}\subset K$ is $f$-invariant.
\end{corollary}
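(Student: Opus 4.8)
The plan is to exploit the uniqueness asserted in Lemma \ref{lem:frontier}: I would show that $f(\hat{K})$ is again a frontier contained in $K$, so that it is forced to coincide with $\hat{K}$.

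First I would verify that the property of being a frontier is preserved under homeomorphisms. Suppose $\hat{K}$ has annular neighborhood $A$, with complementary components $\hat{K}^-$ and $\hat{K}^+$. Since $f$ is a homeomorphism, $f(A)$ is again a topological annulus, and $f(A)\sm f(\hat{K})=f(A\sm \hat{K})$ has exactly two components $f(\hat{K}^-)$ and $f(\hat{K}^+)$, each homeomorphic to an annulus; hence $f(A)$ is an annular neighborhood of $f(\hat{K})$ and $f(\hat{K})$ is an annular continuum. Moreover $f\colon A\to f(A)$ carries boundaries to boundaries, so the boundary of $f(\hat{K}^\pm)$ in $f(A)$ is exactly $f(\bd^\pm \hat{K})$. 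The frontier condition $\hat{K}=\bd^-\hat{K}=\bd^+\hat{K}$ is therefore transported to $f(\hat{K})=\bd^-f(\hat{K})=\bd^+f(\hat{K})$ (the labels $-$ and $+$ may be interchanged, depending on whether $f$ preserves or reverses the orientation along the annulus, but this is irrelevant since both boundaries equal the continuum). Thus $f(\hat{K})$ is a frontier.

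Next, since $K$ is $f$-invariant and $\hat{K}\subset K$, we have $f(\hat{K})\subset f(K)=K$. As $K$ is an annular continuum with empty interior, Lemma \ref{lem:frontier} guarantees that $K$ contains a \emph{unique} frontier. Since both $\hat{K}$ and $f(\hat{K})$ are frontiers contained in $K$, uniqueness forces $f(\hat{K})=\hat{K}$, as desired.

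The argument is essentially formal once one grants that ``frontier'' is a purely topological notion invariant under homeomorphisms; the only point requiring any care is the verification that $f(A)$ realizes the frontier condition for $f(\hat{K})$, which is where the hypothesis that $f$ is a homeomorphism of $S$ enters. I do not anticipate a genuine obstacle, since all the substantive content has already been isolated in the uniqueness statement of Lemma \ref{lem:frontier}.
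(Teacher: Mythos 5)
Your argument is exactly the paper's proof: the paper simply notes that $f(\hat{K})\subset K$ is a frontier and invokes the uniqueness from Lemma \ref{lem:frontier}, which is what you do, with the additional (correct) verification that the frontier property is preserved under homeomorphisms spelled out. No issues.
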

\begin{proof} It is clear that $f(\hat{K})\subset K$ is a frontier, so $f(\hat{K})=\hat{K}$ by uniqueness.
\end{proof}

\begin{lemma}\label{lem:frontier-disjoint}
Let $f\colon S\to S$ be a homeomorphism such that $\Omega(f)=S$, and $K_1$, $K_2$ two periodic frontiers containing 
no periodic point of $f$. Then either $K_1\cap K_2=\emptyset$, or $K_1=K_2$.
\end{lemma}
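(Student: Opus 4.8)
The plan is to argue by contradiction: assume $K_1\cap K_2\neq\emptyset$ and $K_1\neq K_2$, and produce a violation of the uniqueness of the frontier inside an annular continuum (Lemma \ref{lem:frontier}). The key object is the union $K=K_1\cup K_2$. Since $K_1$ and $K_2$ are continua sharing at least one point, $K$ is again a continuum. Moreover each $K_i$, being a frontier, equals the boundary of an open set and is therefore nowhere dense; as a finite union of nowhere dense closed sets, $K$ is nowhere dense, so in particular it has empty interior and $K\neq S$.

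The heart of the argument is to show that $K$ is \emph{annular}, and for this I would invoke Theorem \ref{th:continuo}. First I replace $f$ by a suitable power $g=f^{N}$, with $N$ a common multiple of the periods of $K_1$ and $K_2$, so that $g(K_1)=K_1$, $g(K_2)=K_2$, and hence $g(K)=K$; note that $g$ still satisfies $\Omega(g)=S$ (this is automatic in the relevant area-preserving setting, where $g$ preserves a measure of full support). Since $K_1$ and $K_2$ contain no periodic points of $f$, they contain no periodic points of $g=f^N$, so neither does $K$; this excludes case (1). Since $K\neq S$ and $S$ is a manifold, $K$ cannot be the torus $S=\T^2$ of case (3). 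Applying Theorem \ref{th:continuo} to the $g$-invariant continuum $K$, the only remaining possibility is that $K$ is annular.

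Once $K$ is known to be an annular continuum with empty interior, the proof concludes via Lemma \ref{lem:frontier}: $K$ contains a \emph{unique} frontier. But $K_1\subset K$ and $K_2\subset K$ are both frontiers contained in $K$, so the uniqueness statement forces $K_1=K_2$, contradicting our assumption. This yields the dichotomy $K_1\cap K_2=\emptyset$ or $K_1=K_2$.

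I expect the main obstacle to be twofold. First, one must check carefully that $K_1$ and $K_2$ genuinely play the role of ``a frontier contained in $K$'' in the precise sense used in the uniqueness part of Lemma \ref{lem:frontier}; concretely, one needs each $K_i$ to separate an annular neighborhood of $K$ compatibly with $K$'s own complementary annuli $K^{\pm}$, so that an inclusion of the form $K^{\pm}\subset K_i^{\pm}$ holds after an appropriate choice of orientation. Second, one must ensure that passing to the power $g$ does not destroy the hypothesis $\Omega(g)=S$; this is the only place where the measure-preserving nature of the maps, rather than the bare condition $\Omega(f)=S$, is used. Everything else — connectedness, compactness, and nowhere-density of $K$, together with the exclusion of cases (1) and (3) of Theorem \ref{th:continuo} — is routine.
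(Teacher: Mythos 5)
Your proof is correct and follows essentially the same route as the paper's: form $K=K_1\cup K_2$, pass to a power of $f$ fixing both, exclude cases (1) and (3) of Theorem \ref{th:continuo} to conclude $K$ is annular, and then invoke the uniqueness of the frontier from Lemma \ref{lem:frontier}. The only addition is your (reasonable) remark that $\Omega(f^N)=S$ needs the full-support measure rather than just $\Omega(f)=S$; the paper passes to the power without comment, since the lemma is only ever applied in the area-preserving setting.
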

\begin{proof} By considering an appropriate power of $f$ instead of $f$, we may assume that $K_1$ and $K_2$ are $f$-invariant. If $K_1\cap K_2\neq \emptyset$, then $K=K_1\cup K_2$ is connected, has empty interior, and contains 
no periodic point of $f$, so by Theorem \ref{th:continuo} it is an annular continuum. By Lemma \ref{lem:frontier} there is a unique frontier $\hat{K}$ contained in $K$; hence $K_1=K_2=\hat{K}$.
\end{proof}

\section{Generic conditions}
\label{sec:generic}
From now on, $S$ is a compact orientable surface and $\omega$ is a smooth area element on $S$.

The main generic properties that we will require are comprised in the following
\begin{definition} We say that $f\in \diff^r_\omega(S)$ is \emph{Moser generic} if the following conditions are met:
\begin{enumerate}
\item All periodic points of $f$ are either elliptic or hyperbolic (saddles);
\item There are no saddle connections between hyperbolic periodic points (\ie the stable and unstable manifold of any pair of hyperbolic periodic points are transverse);
\item The elliptic periodic points are Moser stable. This means that if $f^n(p)=p$ and $p$ is elliptic then
\begin{itemize}
\item there are arbitrarily small periodic curves surrounding $p$;
\item the restriction of $f^n$ to each of these curves is an irrational rotation, and all the rotation numbers are different.
\end{itemize}
\end{enumerate}
\end{definition}

\begin{remark} \label{remark-moser}
Conditions (1) and (2) are $C^r$-generic for any $r\geq 1$, due to Robinson \cite{robinson}. These are usually referred to as the Kupka-Smale property for area-preserving diffeomorphisms. Condition (3) is $C^r$-generic due to KAM theory if $r$ is large enough (for instance, $r\geq 16$ \cite{douady}).
Thus, Moser generic diffeomorphisms are $C^r$ generic for $r\geq 16$.
Also note that if $f$ is Moser generic, then $f^n$ is Moser generic for any $n\neq 0$.
\end{remark}


In \cite{mather-area} Mather studies the prime ends compactification of open sets invariant by an area-preserving diffeomorphism.
Recall that an open set $U\subset S$ is a \emph{residual domain} if it is a connected component of $S\sm K$ for some continuum $K$, where $K$ is not a single point.
By \cite[Lemma 2.3]{mather-area}, a residual domain has finitely many boundary components. 
We will also use the following result from that article.


\begin{theorem}[\cite{mather-area}] \label{th:mather-boundary} Let $f\in \diff^r_\omega(S)$ be a Moser generic diffeomorphism and $p$ a hyperbolic periodic point. Then any two branches of $p$ have the same closure.
\end{theorem}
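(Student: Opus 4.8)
The plan is to analyze the four separatrices of $p$ --- the two branches of the unstable manifold $W^u(p)$ and the two branches of the stable manifold $W^s(p)$ --- and to show that their closures all coincide with a single continuum $\Lambda$. First I would reduce to a convenient normal form. Replacing $f$ by the power $f^N$, where $N$ is a common multiple of the period of $p$ and of the period with which $f$ permutes the four branches, we may assume that $p$ is fixed and that each branch is $f$-invariant; by Remark \ref{remark-moser} the map $f^N$ is again Moser generic, and it is of course still area-preserving (in particular $\Omega(f^N)=S$). Denote the unstable branches by $\Gamma^u_1,\Gamma^u_2$ and the stable ones by $\Gamma^s_1,\Gamma^s_2$, and let $W^u_{\mathrm{loc}},W^s_{\mathrm{loc}}$ be local invariant arcs through $p$. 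Since $f^{-1}$ is also Moser generic and area-preserving and interchanges the roles of stable and unstable manifolds, any statement proved for unstable branches has a stable counterpart.

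The soft part of the argument is the propagation of accumulation through the inclination lemma. Suppose we know that some unstable branch, say $\Gamma^u_1$, meets a stable branch transversally at a point $z\neq p$ (a transverse homoclinic point). Passing to a forward iterate we may take $z\in W^s_{\mathrm{loc}}$; since $\Gamma^u_1$ is $f$-invariant we have $z\in\Gamma^u_1$, and the arc of $\Gamma^u_1$ through $z$ is transverse to $W^s_{\mathrm{loc}}$ (transversality at $z$ holds by Moser-generic condition (2), which forbids tangencies). By the $\lambda$-lemma the forward iterates of this arc converge in $C^1$ to $W^u_{\mathrm{loc}}$, and being contained in $\Gamma^u_1$ they give $W^u_{\mathrm{loc}}\subset\overline{\Gamma^u_1}$. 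As $W^u_{\mathrm{loc}}$ contains arcs of both unstable branches and $\overline{\Gamma^u_1}$ is closed and invariant, forward iteration yields $\overline{\Gamma^u_1}=\overline{\Gamma^u_2}=\overline{W^u(p)}$. The time-reversed argument (applied to $f^{-1}$) gives $\overline{\Gamma^s_1}=\overline{\Gamma^s_2}=\overline{W^s(p)}$.

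The hard part --- and the place where area preservation together with the absence of saddle connections is genuinely needed --- is twofold: (a) to produce, for each unstable branch, a transverse intersection with a stable branch in the first place, and (b) to identify the two resulting continua, i.e.\ to show $\overline{W^u(p)}=\overline{W^s(p)}$. For (a) one first checks, using only the hyperbolic expansion along the branch and compactness of $S$, that the forward limit set of a branch along itself is strictly larger than $\{p\}$ (a point on the branch cannot have $p$ as $\omega$-limit since the dynamics pushes it away), so that each $\overline{\Gamma^u_i}$ is a nondegenerate $f$-invariant continuum. The nontrivial point is that this accumulation must be realized as a \emph{transverse} crossing of a stable branch rather than as a tangency or a saddle connection: tangencies and connections are excluded by Moser-generic conditions (1)--(2), while the branch cannot escape or limit onto a non-recurrent piece because every point of $S$ is nonwandering.

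I expect steps (a)--(b) to be the main obstacle, and I would handle them following Mather by passing to the prime-ends compactification of the complementary domains of $\overline{\Gamma^u_i}$ (which, by \cite[Lemma 2.3]{mather-area}, have finitely many boundary components). One shows that area preservation, via Birkhoff-type recurrence for annulus homeomorphisms, constrains the prime-end dynamics so that the accessible prime ends can only correspond to the separatrices of the saddle $p$; this forces the branches to fill the frontier of each complementary domain and pins the stable and unstable closures together, giving $\overline{W^u(p)}=\overline{W^s(p)}$ as well as the transverse homoclinic intersections needed above. Assembling this with the $\lambda$-lemma propagation of the second paragraph produces a single common closure $\Lambda=\overline{W^u(p)}=\overline{W^s(p)}$ for all four branches, which is the assertion of the theorem.
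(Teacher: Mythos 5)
First, a point of reference: the paper does not prove this statement at all --- Theorem \ref{th:mather-boundary} is imported from \cite{mather-area} and used as a black box, so there is no in-paper argument to compare yours against. Judged on its own, your write-up is an accurate outline of the architecture of Mather's proof, but it is not a proof. The reduction to a fixed point with invariant branches, and the $\lambda$-lemma propagation showing that a transverse homoclinic point on $\Gamma^u_1$ forces $W^u_{\mathrm{loc}}\subset\overline{\Gamma^u_1}$ and hence $\overline{\Gamma^u_2}\subset\overline{\Gamma^u_1}$, are correct and standard. But all of the content of the theorem is concentrated in your steps (a) and (b), and there you only say you ``would handle them following Mather'' and that ``Birkhoff-type recurrence constrains the prime-end dynamics so that the accessible prime ends can only correspond to the separatrices of $p$.'' That sentence essentially restates the theorem; nothing in the proposal derives it. Concretely missing are: (i) the analysis of the rotation number of the homeomorphism induced on the circle of prime ends of a complementary domain $U$ of $\overline{\Gamma^u_1}$, ruling out irrational rotation number and relating periodic prime ends to periodic points of $f$ on $\partial U$; (ii) the use of Moser stability to exclude elliptic points from $\partial U$ and of the no-saddle-connection condition to force the accessible hyperbolic points to be $p$ itself with branches in $\partial U$, which is what actually produces the transverse homoclinic intersection required in (a); and (iii) the identification $\overline{W^s(p)}=\overline{W^u(p)}$, which does \emph{not} follow from the mere existence of a transverse homoclinic point (in the dissipative world one routinely has $\overline{W^u}\neq\overline{W^s}$, e.g.\ for H\'enon-like attractors) and genuinely needs the area-preservation and prime-end input.

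One smaller inaccuracy: the parenthetical claim that ``a point on the branch cannot have $p$ as $\omega$-limit since the dynamics pushes it away'' is false --- a transverse homoclinic point $x\in\Gamma^u_1\cap W^s_f(p)$ has $\{p\}$ as its entire forward limit set. The conclusion you want there (that $\overline{\Gamma^u_i}$ is a nondegenerate invariant continuum) is immediate anyway, since the branch is itself a nondegenerate connected invariant set, so the slip is not load-bearing. Verdict: correct skeleton, same route as Mather, but the decisive steps are cited rather than proved, so as a standalone argument the proposal has a genuine gap.
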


By a branch of $p$ one means a connected component of $W^s_f(p)\sm{p}$ or $W^u_f(p)\sm {p}$). Thus if we define $K_{p,f} = \ol{W^s_f(p)}$, we have that $K_{p,f}$ is the closure of any stable or unstable branch of $p$.

\begin{proposition} \label{pro:pixton} For a Moser generic $f\in \diff^r_\omega(S)$, if $p$ is a hyperbolic periodic point and $K_{p,f}$ has a planar neighborhood, then $p$ has a homoclinic intersection.
\end{proposition}
\begin{proof}
The proof of \cite{pixton} assumes that the whole manifold is planar, but it works almost without modification if instead we assume that the stable and unstable manifolds of $p$ are contained in a planar submanifold of $S$. 

This is clear from the proof presented in \cite{oliveira-torus} (the first part of the proof of Theorem 2, p. 582, dealing only with the sphere) since it is a purely topological argument taking place in a neighborhood of the union of the stable and unstable manifolds of $p$, and the only part where the dynamics is relevant is in a local argument in a neighborhood of $p$. In our setting, using the fact that $K_{p,f}$ is the closure of both the stable and unstable manifolds of $p$, we can restrict our attention to a planar neighborhood of $p$, and by embedding this neighborhood in a sphere the same proof applies.
\end{proof}

\begin{definition}\label{def:res} For a Moser generic $f$, we denote by $\mathcal{G}_f$ the set of all Moser generic $g\in \diff^r_\omega(S)$ such that the following property holds: for any hyperbolic periodic point $p$ of $f$ and $k>0$,
\begin{equation}\label{eq:*}
\text{if }K_{p,f}\cap g^k(K_{p,f})\neq \emptyset \text{ then } W^s_f(p)\pitchfork g^k(W^s_f(p))\neq \emptyset \neq W^u_f(p)\pitchfork g^k(W^s_f(p))\quad
\end{equation}
 where $A \pitchfork B$ stands for the set of transversal intersections of $A$ and $B$.
\end{definition}

\begin{proposition}\label{pro:transversal} If $f$ is Moser generic, then $\mathcal{G}_f$ is $C^r$-residual.
\end{proposition}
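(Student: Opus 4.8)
The plan is to show that $\mathcal{G}_f$ is a countable intersection of open dense sets. The defining property in \eqref{eq:*} must hold for every hyperbolic periodic point $p$ of $f$ and every $k>0$. Since $f$ is Moser generic, its hyperbolic periodic points form a countable set, so the indexing over pairs $(p,k)$ is countable. For fixed $(p,k)$, I would let $\mathcal{G}_{p,k}$ denote the set of Moser generic $g$ for which the implication in \eqref{eq:*} holds, and then write $\mathcal{G}_f = (\text{Moser generic } g) \cap \bigcap_{p,k} \mathcal{G}_{p,k}$. Because the Moser generic diffeomorphisms are themselves $C^r$-residual (Remark \ref{remark-moser}), it suffices to prove that each $\mathcal{G}_{p,k}$ contains an open dense subset of $\diff^r_\omega(S)$; a countable intersection of residual sets is residual.

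The main work is the density of each $\mathcal{G}_{p,k}$, and here is where I would exploit the structure of $K_{p,f}$. First I would observe that $\mathcal{G}_{p,k}$ is open: transversal intersections of the invariant manifolds of $p$ with their $g^k$-images persist under small perturbations of $g$, and the hypothesis $K_{p,f}\cap g^k(K_{p,f})\neq\emptyset$ is a closed condition, so the implication is stable. For density, the key point is that $K_{p,f}=\ol{W^s_f(p)}=\ol{W^u_f(p)}$ by Theorem \ref{th:mather-boundary}. Suppose $g$ is Moser generic with $K_{p,f}\cap g^k(K_{p,f})\neq\emptyset$; then the closures of $W^s_f(p)$ and $g^k(W^s_f(p))$ intersect. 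Since the stable and unstable branches are dense in $K_{p,f}$, an intersection of the closures forces the invariant manifolds themselves to come arbitrarily close. I would then perturb $g$ slightly in a small neighborhood of a point where $W^u_f(p)$ and $g^k(W^s_f(p))$ nearly meet, using a standard $C^r$ small area-preserving perturbation (\eg a local Hamiltonian flow supported near the point) to create a genuine transverse intersection, and similarly for $W^s_f(p)\pitchfork g^k(W^s_f(p))$. Because transversality is itself generic and can be achieved while keeping $g$ Moser generic, this produces a $g'$ arbitrarily $C^r$-close to $g$ lying in $\mathcal{G}_{p,k}$.

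The main obstacle I anticipate is making the perturbation argument precise without destroying area-preservation or the Moser generic properties already in place, and ensuring the perturbation is genuinely local so that it affects only the relevant intersection and not the finitely many other constraints. The subtlety is that $W^s_f(p)$ is an $f$-invariant object while we perturb $g$; perturbing $g$ changes $g^k(W^s_f(p))$ but leaves $W^s_f(p)$ and $W^u_f(p)$ fixed, which is exactly what we want, so the perturbation does not interact with the dynamics of $f$ at all. One must still check that creating the desired transverse intersections can be done simultaneously for both conditions in \eqref{eq:*}, but since the two required intersections involve different pieces of the invariant manifolds accumulating on the common continuum $K_{p,f}$, one can perform two independent local perturbations. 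Finally, I would invoke that Moser genericity is itself residual and open-dense-style stable under the relevant perturbations, so the perturbed $g'$ can be taken Moser generic, completing the proof that $\mathcal{G}_f$ is $C^r$-residual.
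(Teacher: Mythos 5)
Your overall architecture is the same as the paper's: write $\mathcal{G}_f$ as the intersection of the residual set of Moser generic maps with countably many sets $\mathcal{U}_{p,k}$ indexed by hyperbolic periodic points $p$ of $f$ and $k>0$, observe that each $\mathcal{U}_{p,k}$ is open (the hypothesis of \eqref{eq:*} is closed in $g$, the conclusion is open), and prove density by a localized area-preserving perturbation of $g$, using $K_{p,f}=\ol{W^s_f(p)}=\ol{W^u_f(p)}$ (Theorem \ref{th:mather-boundary}) and the fact that perturbing $g$ leaves $W^s_f(p)$ and $W^u_f(p)$ untouched. All of that matches the paper.

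The genuine gap is in the density step, precisely at the point you flag but do not resolve: ``ensuring the perturbation is genuinely local.'' To turn a near-intersection of $W^s_f(p)$ and $g^k(W^s_f(p))$ into an actual intersection by a perturbation of $g$, you must push a point $y_n\in W^s_f(p)$ so that the \emph{$k$-th} iterate of the perturbed map lands on $W^s_f(p)$; a bump $\til h$ supported in a small set $V$ changes $(g\til h)^k(y_n)$ in the intended way only if the orbit segment $y_n, g(y_n),\dots,g^{k-1}(y_n)$ meets the support exactly once. This fails, for instance, if the intersection point you are working near is a $g$-periodic point of period at most $k$, in which case the orbit segment can return to $V$ and the composition no longer reduces to a single displacement. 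The paper handles this by first conjugating $g$ by a map close to the identity so that $\per_k(g)\cap \bd K_{p,f}=\emptyset$ (possible because $\per_k(g)$ is finite for Moser generic $g$ and $\bd K_{p,f}$ has empty interior), then using area preservation and connectedness of $K_{p,f}$ to locate an intersection point $x\in g^k(K_{p,f})\cap\bd K_{p,f}$, which is therefore not in $\per_k(g)$; this yields a neighborhood $U$ of $x$ with $g^i(U)\cap U=\emptyset$ for $-k\le i\le 0$, and the perturbation supported in $g^{-1}(U)$ then gives exactly $\til g^{\,k}(y_n)=x_n$. Without some version of this preliminary step your perturbation scheme can break down, so you need to add it (or an equivalent wandering-support argument) to complete the proof. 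The remaining points in your write-up (doing the $W^s$ and $W^u$ intersections by two successive perturbations, persistence of transversality, countable intersection of residual sets) are fine and agree with the paper.
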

\begin{proof}

Fix a Moser generic $g\in \diff^r_\omega(S)$, $k\in \N$, and a hyperbolic periodic point $p$ of $f$. Since the set $\per_k(g)$ of periodic points of $g$ with period at most $k$ is finite, using a perturbation of $g$ of the form $hgh^{-1}$ with $h$ close to the identity we may assume that $\per_k(g)\cap \bd K_{p,f} =\emptyset$ (because $\bd K_{p,f}$ has empty interior). Suppose $K_{p,f}$ intersects $g^k(K_{p,f})$. Since $g$ preserves area and $K_{p,f}$ is connected, this implies that $g^k(K_{p,f})\cap \bd K_{p,f}\neq \emptyset$. Choose $x\in g^k(K_{p,f})\cap \bd K_{p,f}$, and note that $x\notin \per_k(g)$. 
Let $x_n\in W^s_f(p)$ and $y_n\in W^s_f(p)$ be such that $x_n\to x$ and $g^k(y_n)\to x$. Let $U$ be a neighborhood of $x$ such that $g^{i}(U)$ is disjoint from $U$ for $-k\leq i \leq 0$. By standard arguments we may choose, for any sufficiently large $n$, a map $\til{h}\in \diff^r_\omega(S)$ supported in $g^{-1}(U)$ and $C^r$-close to the identity such that $\til{h}(g^{k-1}(y_n))=g^{-1}(x_n)$. Letting $\til{g}=g\til{h}$ we obtain a map $C^r$-close to $g$ such that $\til{g}^k(y_n)=x_n\in W^s_f(p)$. Thus $\til{g}^k(W^s_f(p))\cap W^s_f(p)\neq \emptyset$, and this intersection can be made transverse with a perturbation of $\til{g}$. Note that once the intersection is transverse, it persists under new perturbations. Since $K_{p,f}$ is the closure of $W^u_f(p)$ as well, and $\tilde{g}^k(K_{p,f})\cap K_{p,f}\neq \emptyset$, we may use the same argument to obtain a perturbation $\hat{g}$ of $\til{g}$ which also has a point of transversal intersection between $W^u_f(p)$ and $\hat{g}^k(W^s_f(p))$. This shows that, for $p$ and $k$ fixed, the set of maps $g$ satisfying (\ref{eq:*}) is dense in $\diff^r_\omega(S)$. Since condition (\ref{eq:*}) is also open, the set $\mathcal{U}_{p,k}\subset \diff^r_\omega(S)$ where property (\ref{eq:*}) holds for this choice of $p$ and $k$ is open and dense. Since $\mathcal{G}_f$ is the intersection of the sets $\mathcal{U}_{p,k}$ over all $k\in \N$ and all hyperbolic periodic points $p$ of $f$ (which are countably many), this completes the proof.
\end{proof}

\begin{proposition}\label{pro:homo} Let $f$ be Moser generic, $U$ a periodic connected open set and $K\subset \bd U$ a periodic nontrivial continuum. If $K$ has a planar neighborhood, then $K$ contains no periodic points.
\end{proposition}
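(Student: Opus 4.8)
The plan is to argue by contradiction: suppose $K$ contains a periodic point $p$. Replacing $f$ by a suitable iterate $f^M$ — which is again Moser generic by Remark \ref{remark-moser}, and for $M$ a common multiple of the periods of $p$, $U$ and $K$ fixes $p$ while leaving $U$, $K$, $W^s_f(p)$ and $W^u_f(p)$ invariant — I may assume $p$ is a fixed point and all of these sets are invariant. Since $\bd U\supset K\neq\emptyset$, the set $U$ is a nonempty proper open subset, and since $p\in K\subset\bd U$ every neighborhood of $p$ meets both $U$ and $S\sm U$. Because $f$ is Moser generic, $p$ is either elliptic or hyperbolic, and I treat the two cases separately.

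\textbf{Elliptic case.} By Moser stability there are arbitrarily small invariant simple closed curves $\gamma_n$ surrounding $p$, each bounding a disk $D_n\ni p$ with $\diam D_n\to 0$, and such that $f|\gamma_n$ is an irrational rotation, hence minimal. Since $S\sm U$ is closed and invariant, $\gamma_n\cap(S\sm U)$ is a closed invariant subset of the minimal system $(\gamma_n,f|\gamma_n)$, so it is either empty or all of $\gamma_n$; that is, either $\gamma_n\subset U$ or $\gamma_n\cap U=\emptyset$. Now $\gamma_n$ separates $S$ into $\inter D_n$ and the connected set $S\sm D_n$, so any connected set disjoint from $\gamma_n$ that meets $\inter D_n$ must be contained in $\inter D_n$. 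If $\gamma_n\cap U=\emptyset$ for some large $n$, then, since $U$ is connected and meets $\inter D_n$ (because $p\in\bd U$), we would get $U\subset\inter D_n$, which is impossible for $n$ large as $\diam D_n\to 0$ while $U$ is a fixed open set. Hence $\gamma_n\subset U$, and therefore $\gamma_n\cap K=\emptyset$, for all large $n$. But then $K$ is a connected set disjoint from $\gamma_n$ and meeting $\inter D_n$ (it contains $p$), so $K\subset\inter D_n$ for all large $n$, forcing $K\subset\bigcap_n\inter D_n=\{p\}$ and contradicting that $K$ is nontrivial. Note that this case uses neither Proposition \ref{pro:pixton} nor the planar hypothesis.

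\textbf{Hyperbolic case.} Here I would first show that the branches of $W^s_f(p)$ and $W^u_f(p)$ lie in $\bd U$ (using invariance of $\bd U$ and the local saddle structure at the boundary point $p$), so that the continuum $K_{p,f}=\ol{W^s_f(p)}$ — which by Theorem \ref{th:mather-boundary} is the common closure of all branches — is contained in the connected component of $\bd U$ through $p$, namely $K$. Consequently $K_{p,f}$ inherits the planar neighborhood of $K$, and Proposition \ref{pro:pixton} applies, producing a transverse homoclinic point: $W^u_f(p)\pitchfork W^s_f(p)\neq\emptyset$. The final step is to contradict the existence of such a homoclinic orbit together with $p\in\bd U$: taking a simple homoclinic loop $L\subset W^s_f(p)\cup W^u_f(p)\subset\bd U$ through $p$ and the disk $\Delta$ it bounds in the planar neighborhood, the connected invariant set $U$ lies in one complementary region of $L$; iterating and using that $K_{p,f}$ has empty interior, one seeks to trap $U$ inside a set of empty interior, contradicting that $U$ is open.

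I expect the main obstacle to be this last step. Unlike the invariant circles of the elliptic case, homoclinic loops do \emph{not} shrink to $p$ under iteration — indeed $f$ preserves area — so $U$ cannot be trapped by the naive argument, and making the contradiction precise appears to require a finer analysis of how $K_{p,f}$ (equivalently $\bd U$ near $p$) separates the planar neighborhood: the transverse homoclinic tangle forces $\bd U$ to contain infinitely many transverse crossings of stable and unstable arcs accumulating at $p$, which should be incompatible with $p$ being an accessible boundary point of a single connected invariant domain. I anticipate that the prime-ends description of $U$ from Mather's work \cite{mather-area} is the right tool to convert this local crossing structure into a contradiction, and that establishing the auxiliary claim $W^s_f(p),W^u_f(p)\subset\bd U$ (so that $K_{p,f}\subset K$ and the planar hypothesis can be transferred to $K_{p,f}$) is a nontrivial but more routine part of the same analysis.
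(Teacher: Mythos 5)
Your elliptic case is complete and is essentially the paper's own argument (the paper phrases the minimality dichotomy in terms of $K\cap\bd D$ rather than $U\cap\gamma_n$, but the content is identical). The genuine gap is in the hyperbolic case, and it is exactly where you flag it yourself: after obtaining $W^s_f(p)\cup W^u_f(p)\subset K$ and a transverse homoclinic point via Proposition \ref{pro:pixton}, you do not produce the contradiction. The route you sketch --- a single homoclinic loop $L$, the disk it bounds, and then a prime-ends analysis of $U$ --- is not how the argument closes, and your own objection to it is correct: homoclinic loops do not shrink under iteration, so a single loop traps nothing, and no prime-ends machinery is needed.

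The missing idea is to replace the one loop by a \emph{nested sequence of small topological rectangles} at $p$, which restores the shrinking feature you exploited in the elliptic case. Since $W^s_f(p)$ and $W^u_f(p)$ have a transverse intersection, the $\lambda$-lemma produces, in a neighborhood of $p$, arcs of $W^u_f(p)$ accumulating $C^1$ on the local unstable manifold and arcs of $W^s_f(p)$ accumulating on the local stable manifold; together with the local invariant manifolds these bound a decreasing sequence of rectangles $R_i$ accumulating on $p$ with $\bd R_i\subset W^s_f(p)\cup W^u_f(p)\subset K\subset \bd U$. Now run precisely your elliptic dichotomy with $R_i$ in place of $D_n$: the open connected set $U$ is disjoint from every $\bd R_i$, so it is either contained in $R_i$ or misses $\inter R_{i+1}$. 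It cannot be contained in all $R_i$ (they shrink to $p$), and once $U\cap\inter R_{i+1}=\emptyset$ the open set $\inter R_{i+1}$ cannot meet $\bd U$ either --- yet it contains points of $W^s_f(p)\cup W^u_f(p)\subset\bd U$. This is the contradiction. Your preliminary reduction ($W^s_f(p),W^u_f(p)\subset\bd U$, hence $K_{p,f}\subset K$ and the planar hypothesis transfers) is consistent with the paper, which relies on Mather's boundary results for this point, so that part of your plan is sound; but as written the hyperbolic case is an unproved claim rather than a proof.
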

\begin{proof}
Let $n$ be such that $f^n(K)=K$ and $f^n(U)=U$. Suppose that there is a periodic point $p$ of $f$ in $K$. We assume that $f^n(p)=p$, by increasing $n$ if necessary. Since $f$ (and so $f^n$) is Moser generic, $p$ is either elliptic (Moser stable) or hyperbolic. By an argument of Mather, we can show that $p$ is hyperbolic: suppose on the contrary that $p$ is Moser stable. Then there is a sequence of invariant disks converging to $p$ such that the dynamics of $f^n$ on the boundary of each disk is minimal. If $D$ is such a disk and $K$ intersects $\bd D$, then $\bd D\subset K\subset \bd U$, which implies by connectedness that $U$ is either contained in $D$ or disjoint form $D$. The first case is not possible if we choose $D$ small enough, while the second case contradicts the fact that $p\in \bd U$. Thus $p$ is hyperbolic.

Since $K$ has a planar neighborhood, it follows from Proposition \ref{pro:pixton} that $p$ has a homoclinic intersection. By the $\lambda$-lemma, this implies that there is a decreasing sequence of rectangles $R_i$ with boundary in $W^s_f(p)\cup W^u_f(p)\subset K$ accumulating on $p$. This contradicts the connectedness of $U$: in fact $U$ cannot be contained in all of the rectangles $R_i$, but by connectedness if $U$ is not contained in $R_i$ then $\ol{U}$ is disjoint from the interior of $R_{i+1}$, which contains points of $\bd{U}$. This is a contradiction.
\end{proof}

\begin{proposition}\label{pro:main} Let $f$ be Moser generic, and let $U$ be an $f$-periodic residual domain. If $U$ is also $g$-periodic for $g\in \mathcal{G}_f$, then $\bd U$ has no periodic points of $f$.
\end{proposition}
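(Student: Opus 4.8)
The plan is to argue by contradiction. Suppose $\bd U$ contains a periodic point $p$ of $f$. Replacing $f$ by a power I may assume $f^n(p)=p$ and $f^n(U)=U$, and since $U$ is $g$-periodic I also fix $m$ with $g^m(U)=U$, so that $g^m(\bd U)=\bd U$. Let $K$ denote the connected component of $\bd U$ containing $p$ (a nondegenerate continuum); since $\bd U$ is compact with finitely many components (\cite[Lemma 2.3]{mather-area}) and these are permuted by $f$ and by $g^m$, the continuum $K$ is periodic for both maps. First I would rule out that $p$ is elliptic, exactly as in the proof of Proposition \ref{pro:homo}: Moser stability would produce arbitrarily small $f^n$-invariant disks $D$ about $p$ with minimal boundary dynamics, and since $K\ni p$ is not contained in a small $D$, its connectedness forces $\bd D\subset K\subset\bd U$; then $U$ is disjoint from $\bd D$ and hence, being connected, lies in $\inter D$ or misses $D$, both impossible because $p\in\bd U\cap\inter D$. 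Thus $p$ is hyperbolic.

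The next, more delicate, step is to show that the invariant manifolds of $p$ lie in the boundary, i.e.\ $W^s_f(p)\cup W^u_f(p)\subset\bd U$; by Theorem \ref{th:mather-boundary} this is the same as $K_{p,f}=\ol{W^s_f(p)}\subset\bd U$, whence $K_{p,f}\subset K$. The mechanism is that $\bd U$ is closed and $f^n$-invariant, so it suffices to see that no branch of $W^u_f(p)$ (or $W^s_f(p)$) meets $U$ or the exterior $S\sm\ol U$. If a small flow-box around an arc of an unstable branch were contained in $U$, its backward $f^n$-iterates would remain in $U$ and, by the $\lambda$-lemma, sweep an open subset of $U$ accumulating on $W^s_{\mathrm{loc}}(p)$; together with the symmetric statement this is incompatible with $p$ lying on the frontier of the connected set $U$. (This is essentially the prime-ends picture of \cite{mather-area}.)

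With $K_{p,f}\subset K\subset\bd U$ in hand, I would split into two cases according to whether $K$ has a planar neighborhood. If it does, then $K$ is a periodic nondegenerate continuum contained in $\bd U$ with a planar neighborhood, and Proposition \ref{pro:homo} applies directly: $K$ contains no periodic points, contradicting $p\in K$. This case uses nothing about $g$. The remaining case, where $K$ has no planar neighborhood, is where the hypothesis $g\in\mathcal{G}_f$ enters. Since $\bd U$ has finitely many components permuted by $g^m$, I can choose $k$ (a multiple of $m$) with $g^k(K)=K$, so that $g^k(K_{p,f})$ and $K_{p,f}$ both lie in the continuum $K$. \emph{Granting that they intersect}, property (\ref{eq:*}) yields a curve $\gamma=g^k(W^s_f(p))\subset g^k(K_{p,f})\subset\bd U$ meeting both $W^s_f(p)$ and $W^u_f(p)$ transversally. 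Because $\bd U$ is $f^n$-invariant, every iterate $f^{jn}(\gamma)$ also lies in $\bd U$, and a $\lambda$-lemma argument converts this transverse configuration into a decreasing sequence of rectangles $R_i$ accumulating on $p$ whose boundaries lie on $W^s_f(p)\cup W^u_f(p)\cup\bigcup_j f^{jn}(\gamma)\subset\bd U$. Exactly as at the end of the proof of Proposition \ref{pro:homo}, this contradicts the connectedness of $U$: $U$ cannot be contained in every $R_i$, yet if $U\not\subset R_i$ then $\ol U$ is disjoint from $\inter R_{i+1}$, which contains points of $\bd U$.

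The two hard points are the places I glossed over. The first is establishing that the stable and unstable manifolds of a boundary periodic point genuinely sit inside $\bd U$ rather than leaking into $U$ or its exterior; this is the technical heart and is precisely the type of statement furnished by Mather's prime-ends analysis. The second, and I expect the true crux, is verifying that $K_{p,f}$ and $g^k(K_{p,f})$ actually intersect, so that the hypothesis of (\ref{eq:*}) is triggered: here one must exploit that $K$ is a single \emph{nonplanar} (hence homologically essential) periodic component of $\bd U$, so that area preservation of $g$ together with the finiteness of the boundary components prevents the $g^k$-iterate of $K_{p,f}$ from being pushed entirely off itself. Once that intersection is secured, property (\ref{eq:*}) plays exactly the role that planarity played in Proposition \ref{pro:homo}, manufacturing the transverse homoclinic-type data needed to run the $\lambda$-lemma and reach the contradiction.
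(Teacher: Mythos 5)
Your overall architecture matches the paper's up to the point you yourself flag as ``the true crux,'' and that is exactly where there is a genuine gap. The paper does \emph{not} prove that $g^k(K_{p,f})$ meets $K_{p,f}$ by exploiting nonplanarity or area preservation in the way you suggest; indeed your heuristic fails: $K_{p,f}$ has measure zero, so area preservation places no obstruction to all of its $g$-iterates being pairwise disjoint, and two connected subsets of the single connected component $K$ need not intersect even when $g^k(K)=K$. What the paper actually does is dichotomize on whether the sets $K_{p,f}, g(K_{p,f}), g^2(K_{p,f}),\dots$ are pairwise disjoint. If they are, one takes disjoint \emph{nonplanar} compact neighborhoods $N_1,\dots,N_m$ of $m$ of them, pads them so that no complementary component is a disk, and computes $2-2\mathrm{genus}(S)=\chi(S)=\sum_i\chi(N_i)+\chi(\ol{S\sm\cup_i N_i})\le -m$ for every $m$ --- a contradiction. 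Only in the complementary case, where some $g^k(K_{p,f})$ does meet $K_{p,f}$, is property (\ref{eq:*}) invoked, and then the $\lambda$-lemma/rectangle argument runs as you describe. This Euler-characteristic step is the missing idea in your write-up; without it (or some substitute) your second case does not close.

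A secondary structural issue: you split cases according to whether the boundary component $K$ has a planar neighborhood, whereas the paper splits on whether $K_{p,f}=\ol{W^s_f(p)}\subset K$ does. The distinction matters twice. First, if $K$ is nonplanar but $K_{p,f}$ is planar, your second branch is entered but the Euler-characteristic argument (which needs the \emph{iterated} object to be nonplanar) would not apply to $K$'s iterates, since $g^k(K)=K$; applied to $K_{p,f}$ it does, because it is precisely the nonplanarity of $K_{p,f}$ that is assumed for contradiction. Second, the final contradiction in the paper is obtained by applying Proposition \ref{pro:homo} to $K_{p,f}$ (which contains $p$ by construction), not to $K$; applying it to $K$ works only in your first branch. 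So you should restate the dichotomy for $K_{p,f}$ and insert the disjoint-orbit/Euler-characteristic argument; with those repairs your proof coincides with the paper's. (Your treatment of $W^s_f(p)\cup W^u_f(p)\subset\bd U$ is acceptable as is: the paper simply cites \cite[Proposition 11.1]{mather-area} for this.)
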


\begin{proof} If $K$ is a boundary component of $U$, then there is $n>0$ such that $f^n(K)=K=g^n(K)$. Suppose that there is a periodic point $p$ of $f$ in $K$. We assume that $f^n(p)=p$ and $f^n(U)=U=g^n(U)$ by increasing $n$ if necessary. Since $f$ is Moser generic, the first paragraph of the proof of Proposition \ref{pro:homo} shows that $p$ must be hyperbolic. 

By \cite[Proposition 11.1]{mather-area}, it follows that the stable and unstable manifolds of $p$ are contained in $K$, so that $K_p\doteq K_{p,f}$ is contained in $K$.  We will now show that $K_p$ has a planar neighborhood. Assume by contradiction that $K_p$ has no planar neighborhood, and consider two cases.

Suppose first that $K_p, g(K_p), g^2(K_p),\dots$ are pairwise disjoint. Then, for any $m>0$ we can find $m$ disjoint nonplanar manifolds with boundary $N_1,\dots N_m$ in $S$, by taking disjoint neighborhoods of $K_p, g(K_p), \dots, g^{m-1}(K_p)$. Moreover, we may assume that no component of $\ol{S\sm N_i}$ is a disk, by adding disks to $N_i$ if necessary. This implies that each component of $\ol{S\sm {\cup_i} N_i}$ has Euler characteristic at most $0$ (for it has at least two boundary components). On the other hand, the Euler characteristic of each $N_i$ is strictly negative (because by our assumption they are nonplanar and have nonempty boundary), so we have 
$$2-2g=\chi(S) = \chi(N_1)+\cdots +\chi(N_m) + \chi\left(\ol{S\sm \cup_i N_i}\right)\leq -m,$$
where $g$ is the genus of $S$. Since this can be done for all $m$, we get a contradiction. 

Now suppose that $g^k(K_p)\cap K_p\neq \emptyset$ for some $k>0$. Then by Proposition \ref{pro:transversal}, we can find arcs $\gamma_s,\gamma_u\subset g^k(W^s_f(p))\subset K$ such that $\gamma_s$ has a nonempty transversal intersection with $W^s_f(p)$, and $\gamma_u$ has a nonempty transversal intersection with $W^u_f(p)$. By a standard $\lambda$-lemma argument, this implies that, arbitrarily close to $p$, we can find a decreasing sequence of arbitrarily small rectangles $R_i$ bounded by arcs in $W^s_f(p)$, $W^u_f(p)$, $f^{n_ik}(\gamma_u)$ and $f^{-n_ik}(\gamma_s)$ (for some $n_i\in \Z$) accumulating on $p$. Since all of these arcs are contained in $K\subset \bd U$, the same argument used in the proof of Proposition \ref{pro:homo} produces a contradiction from the connectedness of $U$. This completes the proof that $K_p$ has a planar neighborhood.

 But if $K_p$ has a planar neighborhood, we know from Proposition \ref{pro:homo} that $f$ cannot have periodic points on $K_p$, contradicting the definition of $K_p$. This completes the proof.
\end{proof}

\section{Periodic frontiers from invariant open non-dense sets}
\label{sec:nondense}

\begin{lemma} 
Let $f$ be Moser generic and $g\in \mathcal{G}_f$. Suppose that there is an open nonempty $\{f, g\}$-invariant set $U\subset S$ which is not dense in $S$. Then there is a continuum $K\subset S$ with empty interior and $n>0$ such that $f^n(K)=g^n(K)=K$ and there are no periodic points of $f$ or $g$ in $K$. 
\end{lemma}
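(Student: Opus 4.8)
The plan is to realize $K$ as a boundary component of a suitable residual domain: I rule out $f$-periodic points on it using Proposition~\ref{pro:main}, and then rule out $g$-periodic points by a soft argument that exploits the annular structure forced by the $f$-analysis.

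First I would note that, since $U$ is not dense, the set $V=S\sm\ol U$ is nonempty, open, and $\{f,g\}$-invariant. Let $U_0$ be a connected component of $U$. As $f$ and $g$ fix $U$ and hence permute its components, and as $\Omega(f)=\Omega(g)=S$ (both maps being area-preserving), Remark~\ref{rem:wandering} shows that $U_0$ is periodic under both $f$ and $g$. Because $U_0\subsetneq S$ and $S\sm U_0\supseteq V$ is not reduced to a point, $U_0$ is a residual domain, being a connected component of the complement of a nondegenerate complementary continuum (filling in the inessential complementary disks and points if necessary, an operation that respects periodicity since these components are permuted by the iterates fixing $U_0$). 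Fix $n>0$ with $f^n(U_0)=U_0=g^n(U_0)$.

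Next, let $K$ be a connected component of $\partial U_0$. It is a continuum with empty interior: an interior point of $\partial U_0$ would have a neighborhood contained in $\ol{U_0}$ yet disjoint from $U_0$, impossible for a point of $\ol{U_0}$. A residual domain has finitely many boundary components (\cite{mather-area}), and these are permuted by $f^n$ and $g^n$, so after enlarging $n$ I may assume $f^n(K)=K=g^n(K)$. Now Proposition~\ref{pro:main}, applied to the $f$-periodic residual domain $U_0$ which is also $g$-periodic with $g\in\mathcal{G}_f$, shows that $\partial U_0$ --- and hence $K$ --- contains no periodic points of $f$. Finally I would upgrade this to annularity: the map $F=f^n$ satisfies $\Omega(F)=S$ and $F(K)=K$, the continuum $K$ has empty interior (so $K\neq S$), and $K$ has no $F$-periodic point (such a point would be $f$-periodic), so Theorem~\ref{th:continuo} forces $K$ to be annular. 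By Definition~\ref{def:annular}, $K$ then has an annular --- in particular planar --- neighborhood. This is exactly the hypothesis of Proposition~\ref{pro:homo}: $K\subset\partial U_0$ is a $g$-periodic nontrivial continuum with a planar neighborhood and $g$ is Moser generic, so $K$ contains no periodic point of $g$. Thus $K$ has empty interior, satisfies $f^n(K)=g^n(K)=K$, and contains no periodic point of $f$ or of $g$, as required.

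I expect the main obstacle to be the apparent asymmetry of the hypotheses: we are given $g\in\mathcal{G}_f$ but not $f\in\mathcal{G}_g$, so Proposition~\ref{pro:main} cannot simply be applied a second time with the roles reversed to kill the $g$-periodic points. The observation that unlocks the proof is that the $f$-analysis already yields that $K$ is annular, which hands us a planar neighborhood for free; Proposition~\ref{pro:homo}, which needs only Moser genericity together with a planar neighborhood, then eliminates the $g$-periodic points without any transversality assumption on $f$. The one other point requiring care is the extraction of the residual domain, where one must verify that the (possibly filled-in) component is genuinely a residual domain while retaining its periodicity under both maps.
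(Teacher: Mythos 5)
Your overall architecture matches the paper's: pass to a component $U_0$ of $U$, get joint periodicity from area preservation, kill $f$-periodic points on the boundary via Proposition~\ref{pro:main}, and then kill $g$-periodic points. Your resolution of the $f$/$g$ asymmetry is in fact the right one and is spelled out more explicitly than in the paper: since Proposition~\ref{pro:main} only rules out periodic points of $f$ (we are given $g\in\mathcal{G}_f$, not $f\in\mathcal{G}_g$), you invoke Theorem~\ref{th:continuo} to get annularity, hence a planar neighborhood, and then Proposition~\ref{pro:homo} applied to the Moser generic $g$ finishes the job. That step is sound.

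The genuine gap is the claim that $U_0$ is a residual domain, which is what Proposition~\ref{pro:main} requires (its proof goes through Mather's prime-end result \cite[Proposition 11.1]{mather-area}, so the hypothesis is not cosmetic). A residual domain is a connected component of $S\sm K$ for a \emph{single} nondegenerate continuum $K$, and $U_0$ need not be one: take $S=S^2$ and $U_0$ an open annulus, so that $S\sm U_0$ has two components $C_1, C_2$; any continuum $K\subset S\sm U_0$ lies in one $C_i$, and the component of $S\sm K$ containing $U_0$ then also contains the other $C_j$, so it is strictly larger than $U_0$. Your parenthetical "filling in the inessential complementary disks and points" is therefore not an optional tidying-up but the crux, and it is mischaracterized: in the example one must absorb an entire complementary disk (neither of which is distinguished as "inessential"), one must check that a nondegenerate complementary component exists to serve as the retained continuum (this uses that $U$ is not dense), and one must re-verify periodicity of the enlarged set under both maps. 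None of this is carried out. The clean move --- and the one the paper makes --- is to apply Proposition~\ref{pro:main} not to $U_0$ but to a connected component $V$ of $S\sm\ol{U_0}$: since $\ol{U_0}$ is automatically a nondegenerate continuum, $V$ is a residual domain by definition, it is nonempty because $U$ is not dense, it inherits periodicity under both $f$ and $g$, and its finitely many boundary components supply the desired $K$. With that substitution the rest of your argument goes through verbatim.
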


\begin{proof} 
Let $U_0$ be a connected component of $U$. Since $f$ and $g$ are  area-preserving, $U_0$ must be periodic for each of them.
Let $V$ be a connected component of $S\sm \ol{U}_0$. Then $V$ is also periodic for both $f$ and $g$, and it is a residual domain, so Proposition \ref{pro:main} applied to some power of $f$ and some power of $g$ implies that there are finitely many boundary components of $V$, none of which contains a periodic point of $f$ or $g$. If $K$ is one such component, since there are finitely many components, it must be periodic for both $f$ and $g$, so there is $n$ such that $f^n(K)=g^n(K)=K$ as required.
\end{proof}

The characterization of transitivity of $\IFS(f,g)$ used to prove Theorem \ref{th:main} is the following:

\begin{lemma}\label{lem:transitive} If $f$ is Moser generic and $g\in \mathcal{G}_f$, then $\IFS(f,g)$ is transitive if there is no frontier $K\subset S$ which is periodic for both $f$ and $g$.
\end{lemma}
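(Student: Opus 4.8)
The plan is to prove the contrapositive: assuming $\IFS(f,g)$ is not transitive, I will produce a frontier that is periodic for both $f$ and $g$. First I invoke Lemma~\ref{lem:trans-ifs}, whose equivalence of conditions~(2) and~(8) applies here since $f$ and $g$ preserve the finite measure $\omega$ and the family $\{f,g\}$ is countable. Non-transitivity of $\IFS(f,g)$ means condition~(2) fails, so there are nonempty open sets $U,V\subset S$ with $h(U)\cap V=\emptyset$ for every $h\in\gen{\{f,g\}}$ (in particular for $h=\mathrm{id}$). Setting $W=\bigcup_{h\in\gen{\{f,g\}}}h(U)$, left multiplication by $f$ or by $g$ permutes the group $\gen{\{f,g\}}$, so $f(W)=g(W)=W$; moreover $W\supseteq U\neq\emptyset$ while $W\cap V=\emptyset$. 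Hence $W$ is a nonempty open $\{f,g\}$-invariant set which is not dense in $S$.

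Next I apply the first lemma of this section to $W$. It yields a continuum $K\subset S$ with empty interior, together with an integer $n>0$ such that $f^n(K)=g^n(K)=K$, and such that $K$ contains no periodic points of $f$ or of $g$.

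It remains to extract from $K$ a common frontier. Since $f^n$ is area-preserving we have $\Omega(f^n)=S$, the continuum $K$ is $f^n$-invariant, and it contains no periodic point of $f$ and hence none of $f^n$; also $K\neq S$ because $K$ has empty interior. By Theorem~\ref{th:continuo} applied to $f^n$, the continuum $K$ is therefore annular. As $K$ is an annular continuum with empty interior, Lemma~\ref{lem:frontier} provides a unique frontier $\hat K\subset K$. The corollary to Lemma~\ref{lem:frontier} then gives that the $f^n$-invariance of $K$ forces $f^n(\hat K)=\hat K$ and, in the same way, the $g^n$-invariance of $K$ forces $g^n(\hat K)=\hat K$. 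Thus $\hat K$ is a frontier periodic for both $f$ and $g$, contradicting the hypothesis; this contradiction shows that $\IFS(f,g)$ is transitive.

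Almost all the substantive content is carried by ingredients already in hand --- the transitivity equivalences of Lemma~\ref{lem:trans-ifs}, the structure theorem (Theorem~\ref{th:continuo}), and the first lemma of this section --- so the argument above is largely assembly. The single point deserving care is the passage to a \emph{common} period: the first lemma already delivers one integer $n$ with $f^n(K)=g^n(K)=K$, which is exactly what lets me apply the corollary to Lemma~\ref{lem:frontier} twice, once for each generator, and conclude that $\hat K$ is invariant under both $f^n$ and $g^n$ and hence periodic for $f$ and $g$ simultaneously.
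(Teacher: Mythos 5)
Your proof is correct and follows essentially the same route as the paper: non-transitivity yields (via Lemma~\ref{lem:trans-ifs}) a non-dense open $\{f,g\}$-invariant set, the preceding lemma produces an aperiodic continuum $K$ with $f^n(K)=g^n(K)=K$ and empty interior, Theorem~\ref{th:continuo} makes $K$ annular, and Lemma~\ref{lem:frontier} with its corollary extracts the common invariant frontier. Your explicit remarks --- that empty interior rules out the case $K=S=\T^2$ and that aperiodicity for $f$ gives aperiodicity for $f^n$ --- are points the paper leaves implicit, but they do not change the argument.
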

\begin{proof}
Suppose that $\IFS(f,g)$ is not transitive. Then by Lemma \ref{lem:trans-ifs}, there is an open nonempty set $U\subset S$ such that $$\gen{f,g}(U)=\bigcup_{h\in \gen{f,g}} h(U)$$
is not dense in $S$. The above set is open, nonempty and $\gen{f,g}$-invariant (hence, $f$-invariant and $g$-invariant). Thus by the previous lemma there is a continuum $K$ which has no periodic points such that $f^n(K)=g^n(K)=K$. By Theorem \ref{th:continuo}, $K$ must be annular. By Lemma \ref{lem:frontier} and its corollary, $K$ contains a unique frontier which is both $f^n$-invariant and $g^n$-invariant. This completes the proof.
\end{proof}

We will need the following result:
\begin{lemma} \label{lem:generic-disjoint} If $f\in \diff^r_\omega(S)$ is Moser generic, then the invariant frontiers of $f$ are aperiodic and pairwise disjoint.  
\end{lemma}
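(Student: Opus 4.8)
The plan is to split the statement into its two assertions and note that the second is almost immediate once the first is known. Two distinct $f$-invariant frontiers are in particular periodic frontiers, and $\Omega(f)=S$ because $f$ is area-preserving; so once we know that every invariant frontier is aperiodic, Lemma~\ref{lem:frontier-disjoint} forces any two distinct invariant frontiers to be disjoint. Thus everything reduces to showing that an $f$-invariant frontier $K$ contains no periodic point of $f$.

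To prove this aperiodicity I would verify the hypotheses of Proposition~\ref{pro:homo} for $K$. First, being a frontier, $K$ is a nontrivial annular continuum, so it has an annular neighborhood $A$ with $K\subset\inter A$; since an annulus embeds in the sphere, $A$ is a \emph{planar} neighborhood of $K$. It then remains to produce a periodic connected open set $U$ with $K\subset\bd U$. Writing $A\sm K=K^-\cup K^+$ for the two annular components, let $V$ be the connected component of $S\sm K$ containing $K^-$ (well defined, since $K^-$ is connected). Because $K$ is compact and $f$-invariant, $f$ permutes the pairwise disjoint open components of $S\sm K$, so by Remark~\ref{rem:wandering} each of them---in particular $V$---is periodic. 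The one point needing verification is that $K\subset\bd V$: since $K$ is a frontier, $K=\bd^- K$ is the boundary of $K^-$ relative to $A$, and because $K\subset\inter A$ every $x\in K$ is a limit of points of $K^-\subset V$; hence $x\in\ol V$, while $x\notin V$ since $V\cap K=\emptyset$, so $x\in\bd V$. Taking $U=V$, with $K\subset\bd U$ a periodic nontrivial continuum possessing the planar neighborhood $A$, Proposition~\ref{pro:homo} applies and yields that $K$ contains no periodic point of $f$.

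This establishes aperiodicity, and the pairwise disjointness then follows from Lemma~\ref{lem:frontier-disjoint} as explained. I expect the only real work to be the topological setup of the previous paragraph: all of the substantial dynamics (Pixton's homoclinic argument and Mather's theorem) is already packaged inside Proposition~\ref{pro:homo}, so the task is essentially to check carefully that an invariant frontier meets every one of its hypotheses---in particular that the complementary component $V$ is genuinely periodic and has all of $K$ on its boundary, which is exactly where the frontier condition $K=\bd^- K$ and the nonwandering hypothesis $\Omega(f)=S$ are used.
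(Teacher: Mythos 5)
Your proof is correct and follows essentially the same route as the paper's: aperiodicity is obtained by applying Proposition~\ref{pro:homo} to a periodic complementary component of $K$ (the paper phrases this via the residual domains of $K$, whose boundaries union to $K$, together with the planar annular neighborhood), and disjointness then follows from Lemma~\ref{lem:frontier-disjoint}. Your write-up merely spells out the verification that $V$ is periodic and that $K\subset\bd V$, which the paper leaves implicit.
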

\begin{proof} If $K$ is an invariant frontier, then $K$ has at most two residual domains, and $K$ is the union of their boundaries. Since obviously any boundary component of these residual domains consists of more than one point and the frontiers have planar neighborhoods, it follows from Proposition \ref{pro:homo} that $K$ is aperiodic. Now our claim follows from Lemma \ref{lem:frontier-disjoint}.
\end{proof}

\section{Separating two families of disjoint continua}
\label{sec:moeckel}

Let $Q=[0,1]^2$ denote the unit square. 
We say that a continuum $K\subset Q$ \emph{crosses the square} horizontally (resp. vertically) if the two horizontal (resp. vertical) sides of $Q$ are contained in different connected components of $Q\sm K$.

The main result of this section (Lemma \ref{lem:separa-cont}) is very similar to a result of Moeckel \cite{moeckel} for families of Lipschitz graphs, which is in turn inspired by the embedding theory in \cite{embedology}. 
It says that for any pair of disjoint families of continua horizontally crossing the unit square, we can find a smooth, area-preserving map $C^\infty$-close to the identity, coinciding with the identity in a neighborhood of $\bd Q$, such that no element of the first family is mapped to an element of the second one.

First we state a simple lemma, which is proved in \cite{moeckel}.

\begin{lemma}\label{lem:separa} Let $A$ and $A'$ be two subsets of $\R^3$ with upper box dimensions at most $1$. Then for almost every $(x,y,z)\in \R^3$, the sets $A+(x,y,z)$ and $A'$ are disjoint.
\end{lemma}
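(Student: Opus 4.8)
The plan is to reformulate the conclusion as a statement about the Lebesgue measure of a single difference set. Observe that for $v=(x,y,z)$ one has $(A+v)\cap A'\neq\emptyset$ precisely when there exist $a\in A$ and $a'\in A'$ with $a+v=a'$, that is, when $v\in A'-A\doteq\{a'-a: a'\in A',\,a\in A\}$. Hence the set of \emph{bad} translation vectors is exactly the difference set $D=A'-A$, and the lemma is equivalent to the assertion that $D$ has Lebesgue measure zero in $\R^3$; once this is known, almost every $v$ avoids $D$, which is the claim.

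To prove that $\abs{D}=0$, I would use a direct covering estimate rather than invoking a product-dimension formula. Assume first that $A$ and $A'$ are bounded, and fix any $s$ with $1<s<3/2$. Since $A$ and $A'$ have upper box dimension at most $1<s$, for all sufficiently small $\epsilon>0$ each of them can be covered by at most $C\epsilon^{-s}$ sets of diameter at most $\epsilon$. If $U$ is one of the covering sets of $A'$ and $V$ one of those of $A$, then $U-V$ has diameter at most $2\epsilon$, and the collection $\{U-V\}$, numbering at most $C^2\epsilon^{-2s}$, covers $D$. The total Lebesgue measure of this cover is therefore bounded by a constant multiple of $\epsilon^{-2s}(2\epsilon)^3=C'\epsilon^{\,3-2s}$. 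Because $s<3/2$ we have $3-2s>0$, so letting $\epsilon\to 0$ forces $\abs{D}=0$.

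To drop the boundedness hypothesis, write $A=\bigcup_n A_n$ and $A'=\bigcup_m A'_m$ with $A_n=A\cap[-n,n]^3$ and $A'_m=A'\cap[-m,m]^3$; each piece is bounded and, by monotonicity of upper box dimension, still has dimension at most $1$. Then $D=\bigcup_{n,m}(A'_m-A_n)$ is a countable union of sets each of measure zero by the bounded case, so $\abs{D}=0$ as well. I expect no serious obstacle here: the argument is essentially routine once it is phrased through the difference set. The only points needing care are the bookkeeping in the covering estimate and, in particular, checking that the exponent $3-2s$ remains positive — this is exactly what the hypothesis $\overline{\dim}_B\le 1$ guarantees, since it permits choosing $s$ close to $1$ with $2s<3$.
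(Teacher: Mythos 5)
Your proof is correct. Note that the paper does not actually prove this lemma --- it only cites Moeckel --- but your argument is exactly the standard one behind that citation: the set of bad translations is the difference set $A'-A$, which, by a product-of-covers estimate, has upper box dimension at most $2<3$ (equivalently, admits covers of total volume $O(\epsilon^{3-2s})$ with $2s<3$), hence Lebesgue measure zero. The reduction to bounded pieces at the end is the right way to handle the (implicit) boundedness issue, since you only need countable additivity of null sets rather than countable stability of box dimension.
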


We will also need the following result which is proved, although not explicitly stated, in \cite{moeckel}:

\begin{lemma}\label{lem:boxdim} Let $S\subset \R^3$ be a set that is linearly ordered by the partial ordering defined as $(x_1,x_2,x_3) \preceq (y_1,y_2,y_3)$ if $x_i\leq y_i$ for each $i$. Then the upper box dimension of $S$ is at most $1$.
\end{lemma}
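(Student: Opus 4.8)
The plan is to reduce everything to a single combinatorial count of grid cubes, exploiting the fact that a chain in the componentwise order of $\R^3$ maps, under coordinatewise flooring, to a chain in the integer lattice. First I would reduce to the bounded case: upper box dimension is a bi-Lipschitz invariant and the relevant $S$ may be taken inside a cube, so after a positive diagonal rescaling (which preserves the partial order $\preceq$ and is bi-Lipschitz) we may assume $S\subset [0,1]^3$. Fix $\epsilon=1/n$ and let $N(\epsilon)$ denote the number of cubes of the standard grid of side $\epsilon$ that meet $S$, so that $\overline{\dim}_B(S)=\limsup_{\epsilon\to 0}\log N(\epsilon)/\log(1/\epsilon)$.

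The key observation is the following. Define $\Phi\colon S\to \Z^3$ by $\Phi(x_1,x_2,x_3)=(\lfloor x_1/\epsilon\rfloor,\lfloor x_2/\epsilon\rfloor,\lfloor x_3/\epsilon\rfloor)$, so that $N(\epsilon)$ equals the cardinality of the image $I=\Phi(S)$. Since the floor function is nondecreasing and $S$ is linearly ordered by $\preceq$, the image $I$ is again linearly ordered by the componentwise order on $\Z^3$: given $p,q\in S$ with $p\preceq q$ one has $\Phi(p)\preceq\Phi(q)$ coordinatewise, and any two points of $S$ are comparable. Thus $I$ is a chain contained in $\{0,\dots,n-1\}^3$.

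It then remains to bound the length of such a chain. Listing the elements of $I$ as $\alpha^{(1)}\prec\cdots\prec\alpha^{(m)}$, each step from $\alpha^{(t)}$ to $\alpha^{(t+1)}$ strictly increases at least one coordinate, hence strictly increases the coordinate sum $\sigma(\alpha)=\alpha_1+\alpha_2+\alpha_3$ by at least $1$. Since $\sigma$ takes values in $\{0,1,\dots,3(n-1)\}$, the chain has at most $3(n-1)+1=3n-2$ elements. Therefore $N(\epsilon)=|I|\le 3/\epsilon$, and
$$\overline{\dim}_B(S)=\limsup_{\epsilon\to 0}\frac{\log N(\epsilon)}{\log(1/\epsilon)}\le\limsup_{\epsilon\to 0}\frac{\log(3/\epsilon)}{\log(1/\epsilon)}=1,$$
as desired.

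I do not expect a genuine obstacle here, as the argument is elementary. The only points requiring a little care are the reduction to a bounded $S$ (so that the box dimension is defined and the rescaling does not disturb the order) and carrying out the count with the grid-cube definition of $N(\epsilon)$ rather than with arbitrary covers, since it is precisely the grid that interacts cleanly with the floor map and the lattice order.
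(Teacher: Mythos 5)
Your proof is correct, and it takes a different (though closely related) route from the paper's. The paper observes that the coordinate-sum map $\phi(x)=x_1+x_2+x_3$ is injective and order-preserving on $S$, and that its inverse $\psi\colon\phi(S)\to S$ is an isometry when $\R^3$ carries the $l^1$ norm, precisely because along the chain all coordinate increments have the same sign, so $\norm{y-x}_1=\phi(y)-\phi(x)$; since $\phi(S)\subset\R$ has upper box dimension at most $1$ and box dimension is independent of the norm, the same holds for $S$. Your argument discretizes the same monotonicity: the coordinate sum of the floored chain strictly increases at each step, so the image is a chain in the integer grid of cardinality $O(n)$, giving $N(\epsilon)=O(1/\epsilon)$ directly. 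Both proofs carry the same quantitative content; yours avoids invoking the bi-Lipschitz/norm-invariance of box dimension (at the cost of the usual bookkeeping with grid counts and the sequence $\epsilon=1/n$), while the paper's transfers the whole computation to a subset of the real line in one step. Two minor points to tidy: with $\epsilon=1/n$ the floored image lies in $\{0,\dots,n\}^3$ rather than $\{0,\dots,n-1\}^3$ (a coordinate equal to $1$ maps to $n$), which changes your bound to $3n+1$ but nothing else; and the identification $N(\epsilon)=\abs{\Phi(S)}$ is exact only for half-open grid cubes, though either convention yields the same upper box dimension.
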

\begin{proof}
Consider the map $\phi\colon \R^3\to \R$ defined by $\phi(x_1,x_2,x_3)=x_1+x_2+x_3$. Note that $\phi$ is order-preserving in $S$; in fact, if $x,y\in S$ and $x\prec y$ (\ie $x\preceq y$  and $x\neq y$), we have that all three coordinates of $y-x$ are nonnegative, and at least one is positive, so that $0< \phi(y-x)=\phi(y)-\phi(x)$. 
In particular, $\phi|_S$ is a bijection onto its image $E=\phi(S)$. Denoting by $\psi\colon E\to S$ its inverse, we have that if $x=(x_1,x_2,x_3)\prec (y_1,y_2,y_3)=y$, then denoting by $\norm{\cdot}_1$ the $l^1$ norm, $$\norm{\psi(\phi(y))-\psi(\phi(x))}_1 = \norm{y-x}_1= \abs{y_1-x_1} + \abs{y_2-x_2}+\abs{y_3-x_3} =  \abs{\phi(y)-\phi(x)},$$ because $y_i-x_i\geq0$ for all $i\in \{1,2,3\}$. This means that $\psi$ is an isometry if we use the $l^1$ norm in $\R^3$. Since the upper box dimension is independent of the norm used to compute it (and $E\subset \R$ implies that $E$ has upper box dimension at most $1$), the claim follows.
\end{proof}

\begin{lemma}\label{lem:separa-cont} Let $\mc{K}$ and $\mc{K}'$ be two families of pairwise disjoint continua horizontally crossing the unit square $Q=[0,1]^2$ which are disjoint from $[0,1]\times [0,\delta]$ and $[0,1]\times [1-\delta,1]$ for some small $\delta>0$. Then there exists an area-preserving $C^\infty$ diffeomorphism $h\colon Q\to Q$ arbitrarily close to the identity in the $C^\infty$ topology, such that $h$ coincides with the identity in a neighborhood of $\bd Q$ and  $h(\mc{K})=\{h(K):K\in \mc{K}\}$ is disjoint from $\mc{K}'$.
\end{lemma}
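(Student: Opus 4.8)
The plan is to adapt Moeckel's strategy from \cite{moeckel} (families of Lipschitz graphs) to arbitrary crossing continua, using an observation map into $\R^3$ together with Lemmas \ref{lem:separa} and \ref{lem:boxdim}. The first thing I would record is the order structure of the two families. Since each element of $\mc{K}$ (and of $\mc{K}'$) crosses $Q$ horizontally, any vertical segment $\{x\}\times[0,1]$ meets it, and one verifies the elementary planar fact that two such continua are disjoint if and only if one lies entirely \emph{below} the other (contained in the closure of the bottom component of the complement of the other). Hence each pairwise-disjoint family is \emph{linearly ordered} by ``below''. Moreover, if $P$ lies below $P'$ then for every abscissa $x$ the lowest point of $P$ on $\{x\}\times[0,1]$ is strictly below the lowest point of $P'$ there; this is what makes the order detectable by finitely many ``height'' observations.

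Next I would fix three abscissae $x_1,x_2,x_3\in(0,1)$ and set, for a crossing continuum $P$, $\phi_j(P)=\min\{y:(x_j,y)\in P\}$, giving an observation map $\Phi=(\phi_1,\phi_2,\phi_3)\colon P\mapsto\R^3$. By the previous paragraph $\Phi$ is coordinatewise strictly monotone along each chain, so $A=\Phi(\mc{K})$ and $A'=\Phi(\mc{K}')$ are linearly ordered subsets of $\R^3$ and, by Lemma \ref{lem:boxdim}, both have upper box dimension at most $1$. The perturbation I would build is a $3$-parameter family $h_c$, $c=(c_1,c_2,c_3)$ small, realizing on the band $[0,1]\times[\delta,1-\delta]$ the vertical shear $(x,y)\mapsto(x,y+\sum_j c_j\psi_j(x))$, where $\psi_j$ is a fixed bump equal to $1$ on a neighborhood of $x_j$, with the $\psi_j$ having disjoint supports. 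A pure $x$-shear is area preserving but cannot be the identity near the top and bottom edges; so I would instead take $h_c$ to be the time-one map of the Hamiltonian flow of $H_c(x,y)=\Psi_c(x)\,\sigma(y)$, where $\Psi_c'=\sum_j c_j\psi_j$ and $\sigma\equiv1$ on a neighborhood of $[\delta,1-\delta]$ while $\sigma\equiv0$ near $0$ and $1$. On the band the field is purely vertical, so (with appropriate signs) the time-one map is exactly the shear above; it is $C^\infty$, area preserving, equal to the identity near $\bd Q$, and $C^\infty$-close to the identity for small $c$. Since $h_c$ is a vertical translation by $c_j$ in a neighborhood of $\{x=x_j\}$, we get $\Phi(h_c(P))=\Phi(P)+c$.

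With these pieces in place, Lemma \ref{lem:separa} gives that for almost every $c$ the set $A+c$ is disjoint from $A'$; choosing such a $c$ arbitrarily small makes $h_c$ as close to the identity as desired and yields $\Phi(h_c(K))\neq\Phi(K')$ for all $K\in\mc{K}$, $K'\in\mc{K}'$. The main obstacle, and the real content of the lemma, is to pass from this coincidence-free statement about the three observations to genuine disjointness of the continua: a priori $h_c(K)$ and $K'$ could still meet away from the transversals while having different height triples. This is exactly where the two-dimensional topology of crossing continua must be used. The mechanism I would aim to establish is that a geometric intersection $h_c(K)\cap K'\neq\emptyset$ forces, over some common abscissa, a vertical coincidence between $K$ and $K'$ after the shear, and that unwinding this through the explicit form of $h_c$ confines the offending parameters to a difference set of the two observation sets — a set of upper box dimension at most $2$, hence of measure zero in $\R^3$ (this is precisely the content underlying Lemma \ref{lem:separa}). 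Equivalently, using that disjointness of crossing continua is the same as their being globally ordered, one upgrades ``observations distinct'' to ``observations ordered'' for almost every $c$. Carrying out this correspondence for arbitrary, possibly wild, continua rather than for Lipschitz graphs as in \cite{moeckel} is the crux of the proof; once it is in place, $h=h_c$ for a suitable small good parameter $c$ is the desired diffeomorphism.
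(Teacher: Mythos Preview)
Your overall strategy---three height observations, box-dimension bound via Lemma \ref{lem:boxdim}, a small Hamiltonian shear, then Lemma \ref{lem:separa}---is exactly the paper's approach. The difference is that you have misread the conclusion of the lemma. ``$h(\mc{K})$ is disjoint from $\mc{K}'$'' means disjoint \emph{as families}, i.e.\ $h(K)\neq K'$ for every $K\in\mc{K}$ and $K'\in\mc{K}'$; it does \emph{not} assert $h(K)\cap K'=\emptyset$ as subsets of $Q$. This is confirmed by the paper's own proof, which ends with ``Thus, if $K\in\mc{K}$ and $K'\in\mc{K}'$ then $h(K)\neq K'$,'' and by the way the lemma is applied in \S\ref{sec:proof}, where one only needs that no frontier from one family coincides with a frontier from the other.

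Once you read the statement correctly, the ``main obstacle'' you identify evaporates: from $\Phi(h_c(K))=\Phi(K)+c\neq\Phi(K')$ one gets $h_c(K)\neq K'$ immediately, and the argument is complete. The paragraph about forcing geometric intersection to imply vertical coincidence, and about upgrading ``observations distinct'' to ``observations ordered,'' is unnecessary (and, as you yourself note, not actually carried out). A minor technical point: your Hamiltonian $H_c(x,y)=\Psi_c(x)\sigma(y)$ with $\Psi_c$ an antiderivative of $\sum_j c_j\psi_j$ need not vanish near the vertical sides of $Q$ (since $\Psi_c(1)$ is generally nonzero), so the flow is not the identity there; the paper avoids this by using three independent Hamiltonians $\alpha_i$ each compactly supported in a small rectangle $R_i$ around $\ell_i$, which is the simplest fix.
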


\begin{proof} Denote by $\pr_2\colon Q\to [0,1]$ the projection onto the second coordinate. Given $K_1, K_2 \in \mc{K}$, and $x\in [0,1]$, we write $K_1\prec_x K_2$ if 
$$\max \pr_2(K_1\cap \ell) < \max \pr_2(K_2\cap \ell),$$
where $\ell=\{x\}\times [0,1]$. It is easily verified that $\prec_x$ is a total ordering of $\mc{K}$. Moreover, this ordering is independent of $x$. In fact, $\ell\sm K_1$ has exactly one connected component containing $(x,1)$, which is included in $\ell\cap K_1^+$, where $K_1^+$ is the connected component of $Q\sm K_1$ containing $[0,1]\times \{1\}$. If $K_1\prec_x K_2$ then $K_2\cap K_1^+\neq \emptyset$, and since $K_2$ is connected and $K_1$ is disjoint from $K_2$ it follows that $K_2\subset K_1^+$. This in turn implies that $K_1 \prec_{x'} K_2$ for any $x'\in [0,1]$. Thus we may unambiguously write $K_1\prec K_2$ if $K_1\prec_{x} K_2$ for some $x$. 

\begin{figure}[ht!]
\centering{\resizebox{0.4\textwidth}{!}{\includegraphics{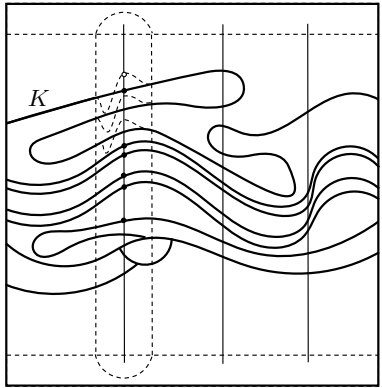}}}
  \caption{Construction of $h$} \label{fig2}
\end{figure}

Fix three numbers $x_i\in (0,1)$ such that $x_1<x_2<x_3$, and define $\ell_i = \{x_i\}\times [\delta,1-\delta]$, $i\in \{1,2,3\}$. Note that for each $K\in \mc{K}$, the line $\ell_i$ intersects $K$. Thus, for each $i$ we may define a map $$y_i\colon \mc{K}\to \R, \quad y_i(K)= \max\pr_2(K\cap \ell_i).$$

From the previous observations, $y_i(K_1)<y_i(K_2)$ for some $i$ if and only if $y_i(K_1)<y_i(K_2)$ for all $i\in \{1,2,3\}$ (\ie iff $K_1\prec K_2$). 
Thus the hypotheses of Lemma \ref{lem:boxdim} hold for the set $$A=\{(y_1(K),y_2(K),y_3(K)): K\in \mc{K}\} \subset [0,1]^3,$$ so we know that the upper box dimension of $A$ is at most one.

We can order the elements of $\mc{K}'$ in a similar way and define maps $y_i'\colon \mc{K}'\to \R$ in an analogous way, obtaining a set  $$A'=\{(y_1'(K),y_2'(K),y_3'(K)): K\in \mc{K}'\}$$ 
which also has upper box dimension at most one.
It follows from Lemma \ref{lem:separa} that we can choose
$(t_1,t_2,t_3)\subset [0,1]^3$ 
arbitrarily close to the origin such that $A+(t_1,t_2,t_3)$ is disjoint from $A'$.

For each $i$, we take small disjoint rectangular neighborhoods $R_i$ of each $\ell_i$, and we can define $C^\infty$ functions
 $\alpha_i\colon Q\to \R$ such that $\alpha_i(x,y)=x$ for $(x,y)$ in a small neighborhood of $\ell_i$, $\alpha_i(x,y)=0$ for $(x,y)\in Q\sm R_i$, and $\abs{\alpha_i(x,y)}\leq 1$ for all $(x,y)\in Q$. It is easy to see that the Hamiltonian flow $\phi_i^t$ induced by the vector field $(-\partial_y\alpha_i, \partial_x \alpha_i)$ has the property that 
 $\phi_i^t(Q) =Q$ for $t\in \R$, 
 $\phi_i^t(x,y)=(x,y)$ if $(x,y)\in Q\sm R_i$, and there is 
 $\epsilon>0$ such that $\phi_i^t(x,y) = (x,y)+(0,t)$ if $\abs{t}<\epsilon$ and 
  $(x,y)\in \ell_i$.

We choose $(t_1,t_2,t_3)\in \R^3$ with $\abs{t_i}< \epsilon$ for each $i$, such that $A+(t_1,t_2,t_3)$ is disjoint from $A'$. Note that the maps $\phi_1^{t_1}$, $\phi_2^{t_2}$ and $\phi_3^{t_3}$ have disjoint supports. Letting $h=\phi_1^{t_1}\circ \phi_2^{t_2} \circ \phi_3^{t_3}$, we have that $h(A)=A+(t_1,t_2,t_3)$, which is disjoint from $A'$. 
Thus, if $K\in \mc{K}$ and $K'\in \mc{K'}$ then $h(K)\neq K'$, since $\max \pr_2 h(K)\cap \ell_i\neq \max \pr_2 K\cap \ell_i$ for some $i\in \{1,2,3\}$. If $\epsilon$ is chosen sufficiently small, we get $h$ arbitrarily $C^\infty$ close to the identity. This completes the proof.
\end{proof}

\section{Proof of Theorem \ref{th:main}}
\label{sec:proof}

First we state a finer result, and we show how it implies Theorem \ref{th:main}. Then, after introducing a useful definition, we complete the proof.

\begin{theorem}\label{th:transitive} For $r\geq 16$, if $f\in \diff^r_\omega(S)$ is Moser generic, there is a $C^r$-residual set $\mc{R}_f\subset \diff^r_\omega(S)$ such that if $g\in \mc{R}_f$ then $\IFS(f,g)$ is transitive.
\end{theorem}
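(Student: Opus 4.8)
The plan is to combine the structural results already established with the geometric separation lemma of the previous section. The guiding idea is provided by Lemma \ref{lem:transitive}: if $f$ is Moser generic and $g\in\mathcal{G}_f$, then $\IFS(f,g)$ fails to be transitive only if there is a frontier $K\subset S$ that is periodic for both $f$ and $g$ simultaneously. So it suffices to produce a residual set $\mc{R}_f\subset\diff^r_\omega(S)$, contained in $\mathcal{G}_f$, consisting of maps $g$ that share no common periodic frontier with $f$. By Proposition \ref{pro:transversal} the set $\mathcal{G}_f$ is already $C^r$-residual, so I can work inside it freely.

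First I would fix $f$ Moser generic and record, by Lemma \ref{lem:generic-disjoint}, that the family of invariant (hence periodic) frontiers of $f$ is a collection of pairwise disjoint, aperiodic annular continua. The strategy is to separate, for each period, the family of $f$-periodic frontiers from the family of $g$-periodic frontiers by a small perturbation of $g$. For a fixed $n$, let $\mc{F}_n(f)$ denote the $f$-periodic frontiers of period $n$ and similarly $\mc{F}_n(g)$; I want to arrange $g(K)\neq K'$ for every $K\in\mc{F}_n(f)$ and $K'\in\mc{F}_n(g)$, and persistently so. The key point is that a frontier is an annular continuum, so locally, inside a suitable annular neighborhood (or after passing to a chart in which the relevant piece becomes a continuum crossing the unit square $Q=[0,1]^2$ horizontally), the separation Lemma \ref{lem:separa-cont} applies: it yields an area-preserving $h$ arbitrarily $C^\infty$-close to the identity, equal to the identity near $\bd Q$, with $h(\mc{K})$ disjoint from $\mc{K}'$. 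Composing $g$ with such an $h$ (extended by the identity outside the chart) perturbs $g$ and destroys the coincidence $g(K)=K'$ without creating new ones. I would show that the condition ``no common periodic frontier of period $\le n$'' is open and dense, and then intersect over all $n$ to obtain the residual set $\mc{R}_f$.

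The main obstacle, and the step requiring the most care, is the passage from the abstract frontiers to the concrete ``continua crossing a square'' hypothesis of Lemma \ref{lem:separa-cont}. A frontier lives in an annular neighborhood of $S$ and is a global object, so I must choose coordinates in which a fundamental piece of each relevant frontier appears as a continuum crossing $Q$ horizontally, while controlling the overlap of annular neighborhoods of distinct frontiers and ensuring that the perturbation respects the invariance/periodicity bookkeeping. I also need that after applying $h$ the perturbed $g$ still lies in $\mathcal{G}_f$ and remains Moser generic, which is fine because $\mathcal{G}_f$ is residual and the relevant transversality conditions are open. Care is also needed because the two families $\mc{F}_n(f)$ and $\mc{F}_n(g)$ are only pairwise disjoint within each family (by Lemma \ref{lem:frontier-disjoint}), and I must verify the families remain disjoint continua crossing the square so that the hypotheses of Lemma \ref{lem:separa-cont} are genuinely met.

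Finally, I would assemble these pieces: density for each fixed $n$ comes from Lemma \ref{lem:separa-cont}, openness comes from the fact that a strict inequality $\max\pr_2 h(K)\cap\ell_i\neq\max\pr_2 K'\cap\ell_i$ persists under small perturbations, and the countable intersection over $n$ together with $\mathcal{G}_f$ gives a $C^r$-residual $\mc{R}_f$. For $g\in\mc{R}_f$ there is then no frontier periodic for both $f$ and $g$, so Lemma \ref{lem:transitive} yields that $\IFS(f,g)$ is transitive, completing the proof.
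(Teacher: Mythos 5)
Your overall strategy coincides with the paper's: reduce via Lemma \ref{lem:transitive} to excluding common periodic frontiers, separate the family of $f$-periodic frontiers from that of $g$ using Lemma \ref{lem:separa-cont}, and intersect countably many open dense conditions inside $\mathcal{G}_f$. However, two steps as you describe them would fail. The first is the perturbation itself: you propose \emph{composing} $g$ with the separating map $h$. The periodic frontiers of $h\circ g$ bear no controlled relation to those of $g$ (if $g^n(K)=K$, nothing forces $(hg)^n$ to fix $h(K)$ or any other recognizable set), so composition neither destroys the coincidences in the relevant sense nor rules out the creation of new common frontiers. The paper instead \emph{conjugates}: $\tilde g=hgh^{-1}$ satisfies $\mc{K}(\tilde g^{\,n})=h(\mc{K}(g^n))$ exactly, so the disjointness produced by $h$ inside the square translates verbatim into disjointness of the two families of invariant frontiers. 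This conjugation trick is what makes the perturbation scheme work, and it is absent from your write-up.

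The second gap is openness. Persistence of the strict inequalities $\max\pr_2 h(K)\cap\ell_i\neq\max\pr_2 K'\cap\ell_i$ only controls a fixed pair of continua; the real difficulty is that the family of $g$-periodic frontiers does not vary continuously with $g$ --- arbitrarily small perturbations can create new periodic frontiers, so tracking finitely many inequalities proves nothing. The paper instead shows the \emph{complement} of each condition is closed: if $g_m\to g$ and each $g_m$ shares with $f$ a frontier $K_m$ essential in a fixed closed annulus $E$, then all $K_m$ lie in the single pairwise disjoint, linearly ordered family $\mc{K}(f^n)$, a monotone subsequence converges in the Hausdorff topology to a continuum with empty interior invariant under both $f^n$ and $g^n$, and Lemma \ref{lem:frontier} extracts a common frontier for the limit map $g$. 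Note that both this closedness argument and the density argument (a single square chart crossing $E$ meets every frontier essential in $E$) require indexing the conditions not only by the period $n$ but also by a closed annulus $E$ ranging over a countable family $\mc{A}$ such that every closed annulus is essential in the interior of some member. This localization is exactly the issue you flag as ``the main obstacle'' but leave unresolved; without it neither the density nor the openness of your period-$n$ conditions is established.
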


\begin{proof}[Proof of Theorem \ref{th:main} assuming Theorem \ref{th:transitive}]
 It is easy to see that the set of pairs $(f,g)$ such that $\IFS(f,g)$ is transitive is a $G_\delta$ set in $\diff^r_\omega(S)\times \diff^r_\omega(S)$ with the product $C^0$ topology (and hence, also with the product $C^r$ topology, for any $r\in \N\cup\{\infty\}$). From Theorem \ref{th:transitive} and Remark \ref{remark-moser}, this set is dense if $r\geq 16$. To conclude the proof, it is enough to note that $\diff^r_\omega(S)$ is dense in $\diff^k_\omega(S)$ if $r\geq k$ (see \cite{zehnder}). 
\end{proof}

\begin{definition} \label{def:essential} We say that a continuum $K$ is \emph{essential} in an open annular set $A$ if $K$ is annular and $A$ is an annular neighborhood of $K$ (see Definition \ref{def:annular}).
If $E$ is a closed annulus, we say that $K$ is essential in $E$ if $K$ is essential in some annular neighborhood of $E$.
\end{definition}

\begin{proof}[Proof of Theorem \ref{th:transitive}] Let $E\subset S$ be diffeomorphic to a closed annulus, $n>0$, and $f\in \diff^r_\omega(S)$ be Moser generic. Let $\mc{R}^n_{f,E}$ be the set of all $g\in \diff^r_\omega(S)$ such that there is no frontier which is essential in $E$ and invariant by both $f^n$ and $g^n$. We will prove that $\mc{R}^n_{f,E}$ is open and dense.

Let $A$ be an open annular neighborhood of $E$. Denote by $\mc{K}(f^n)$ the family of all $f^n$-invariant frontiers which are essential in $E$. To prove density, we will find $h\in \diff^\infty_\omega(S)$, arbitrarily $C^r$-close to the identity, such that $\mc{K}(hg^nh^{-1})\cap \mc{K}(f^n)=\emptyset$.

Denote by $A_1$ and $A_2$ the two (annular) components of $A\sm E$. 
Let $\phi\colon Q=[0,1]^2\to S$ be a $C^\infty$ embedding such that 
$\phi([0,1]\times\{0\})\subset A_1$, $\phi([0,1]\times\{1\})\subset A_2$ and $\phi(Q)\subset A$ (this can easily be obtained using a small tubular neighborhood of a simple arc joining the boundary components of $E$). By a result of Moser \cite{moser}, we may assume that $\phi$ maps the area element of $Q$ to the restriction of $\omega$ to $\phi(Q)$. 
Let $$\hat{\mc{K}}_0(f^n)=\{\phi^{-1}(K\cap \phi(Q)):K\in \mc{K}(f^n)\},$$
and denote by $\hat{\mc{K}}(f^n)$ the family of all connected components of $\hat{\mc{K}}_0(f^n)$ that separate $Q$. Since elements of $\mc{K}(f^n)$ are pairwise disjoint (by Lemma \ref{lem:generic-disjoint}), $\hat{\mc{K}}(f^n)$ is a family of pairwise disjoint continua. It is easy to see that each of these continua horizontally separates $Q$, and they satisfy the hypotheses of Lemma \ref{lem:separa-cont}. 

If $g$ is another Moser generic element of $\diff^r_\omega(S)$, then we can define $\mc{K}(g^n)$ and $\hat{\mc{K}}(g^n)$ similarly. By Lemma $\ref{lem:separa-cont}$, there is $h_0\in \diff^\infty_{\rm{Leb}}(Q)$ arbitrarily $C^\infty$ close to the identity, which coincides with the identity in a neighborhood of $\bd Q$ and such that $h_0(\hat{\mc{K}}(g^n))$ and $\hat{\mc{K}}(f^n)$ are disjoint. 
Defining $h\in \diff^\infty_\omega(S)$ by $h(x)=\phi(h_0(\phi^{-1}(x)))$ 
if $x\in \phi(Q)$ and $h(x)=x$ otherwise, we have that $h(\mc{K}(g^n))$ and $\mc{K}(f^n)$ are disjoint, and $$\mc{K}((hgh^{-1})^n)=\mc{K}(hg^nh^{-1})=h(\mc{K}(g^n)).$$ Since we may assume that $h$ is arbitrarily $C^r$ close to the identity, we may also assume that $\til{g}=hgh^{-1}$  is $C^r$ close to $g$, and by construction $\til{g}^n$ and $f^n$ have no common invariant frontiers which are essential in $E$.
This proves the $C^r$ density of $\mc{R}^n_{f,E}$.

We now prove that the complement of $\mc{R}^n_{f,E}$ is $C^0$ (thus $C^r$) closed. Let $\{g_n\}$ be a sequence of diffeomorphisms not in $\mc{R}^n_{f,E}$, such that $g_n\to g\in \diff^r_\omega(S)$ in the $C^0$ topology. Then, for each $m$ there is $K_m\in \mc{K}(f^n)$ such that $g^n(K_m)=K_m$. By compactness, there is a subsequence $\{K_{m_i}\}$ which converges in the Hausdorff topology, and it is easy to see that its limit must be a continuum $K$ which is essential in $E$, and $f^n(K)=K=g^n(K)$. Moreover, since $K$ is a Hausdorff limit of a sequence of disjoint frontiers, it follows that $K$ has empty interior. In fact, the sequence $K_{m_i}$ can be chosen so that it is either increasing or decreasing with respect to the ordering defined (using the notation of Definition \ref{def:frontier}) by $K_0<K_1$  if $K_0^-\subset K_1^-$ (and consequently $K_0^+\supset K_1^+$) for $K_0,K_1 \in \mc{K}(f^n)$; if the sequence is increasing then it is easy to see that $K = \bd\cup_i K_{m_i}^-$, which has empty interior (and similarly if the sequence is decreasing). 
Since $K$ is $f^n$-invariant and $g^n$-invariant and has empty interior, it contains a unique frontier $\hat{K}\in \mc{K}(f^n)\cap \mc{K}(g^n)$ (by Lemma \ref{lem:frontier} and its corollary). Thus, $g\notin \mc{R}^n_{f,E}$. This proves that $\diff^r_\omega(S)\sm \mc{R}^n_{f,E}$ is closed; hence $\mc{R}^n_{f,E}$ is open and dense.

Now consider a countable family $\mc{A}$ of closed annuli in $S$ such that for any closed annulus $E\subset S$ there is $E'\in \mc{A}$ such that $E$ is essential in the interior of $E'$. Such a family can be obtained as follows: Consider a sequence of (finite) triangulations $\{\mc{T}_i\}_{i\in \N}$ of $S$, such that the mesh of $\mc{T}_i$ tends to $0$ when $i\to \infty$. For each $i$, consider the family $\mc{F}_i$ of simple closed curves formed by sides of elements of $\mc{T}_i$. Note that every simple closed curve can be $C^0$-approximated by elements of $\mc{F}_i$. Let $\mc{A}$ be the family of all annuli whose boundaries are elements of $\mc{F}_i$ for some $i$.
That family is clearly countable. Furthermore, if $E\subset S$ is a closed annulus then we can consider an open annular neighborhood $A$ of $E$ so that $A\sm{E}$ is a union of two open annuli $A_1$ and $A_2$. If $i$ is large enough, $A_1$ and $A_2$ both contain some element of $\mc{F}_i$, one of which is essential in $A_1$ and the other in $A_2$. These curves bound an annulus $E'\in \mc{A}$, and $E$ essential in the interior of $E'$, so $\mc{A}$ has the required property.

We know that for each $E\in \mc{A}$ and $n>0$, the set $\mc{R}^n_{f,E}$ is open and dense; thus $$\mc{R}_f= \bigcap_{n\in \N,\, E\in \mc{A}} \mc{R}^n_{f,E}\cap \mathcal{G}_f $$
is a residual subset of $\diff^r_\omega(S)$. Let $g\in \mc{R}_f$ and let $K\subset S$ be a frontier such that $g^n(K)=K$ for some $n>0$. Note that, since $K$ is contained in an annulus, $K\subset E$ for some $E\in \mc{A}$. If $f^m(K)=K$ for some $m>0$, then $K$ is $f^{mn}$-periodic and $g^{mn}$-periodic, which is not possible because $g\in \mc{R}^{mn}_{f,E}$.
Thus no frontier in $S$ is periodic for both $f$ and $g$ simultaneously. By Lemma \ref{lem:transitive}, we conclude that $\IFS(f,g)$ is transitive for any $g\in \mc{R}_f$. This completes the proof.
\end{proof}

\bibliographystyle{amsalpha} 
\bibliography{tesis}

\providecommand{\bysame}{\leavevmode\hbox to3em{\hrulefill}\thinspace}
\providecommand{\MR}{\relax\ifhmode\unskip\space\fi MR }
\providecommand{\MRhref}[2]{%
  \href{http://www.ams.org/mathscinet-getitem?mr=#1}{#2}
}
\providecommand{\href}[2]{#2}
\begin{thebibliography}{DdlLS06}

\bibitem[ABC05]{abc}
M.-C. Arnaud, C.~Bonatti, and S.~Crovisier, \emph{Dynamiques symplectiques
  g\'en\'eriques}, Ergodic Theory Dynam. Systems \textbf{25} (2005), no.~5,
  1401--1436.

\bibitem[Arn64]{arnold}
V.~I. Arnol'd, \emph{Instability of dynamical systems with many degrees of
  freedom}, Dokl. Akad. Nauk SSSR \textbf{156} (1964), 9--12.

\bibitem[AZ05]{zanata}
S.~Addas-Zanata, \emph{Some extensions of the {P}oincar\'e-{B}irkhoff theorem
  to the cylinder and a remark on mappings of the torus homotopic to {D}ehn
  twists}, Nonlinearity \textbf{18} (2005), no.~5, 2243--2260.

\bibitem[BC04]{bonatti-crovisier}
C.~Bonatti and S.~Crovisier, \emph{R\'ecurrence et g\'en\'ericit\'e}, Invent.
  Math. \textbf{158} (2004), no.~1, 33--104.

\bibitem[CY04]{CY}
C.-Q. Cheng and J.~Yan, \emph{Existence of diffusion orbits in a priori
  unstable {H}amiltonian systems}, J. Differential Geom. \textbf{67} (2004),
  no.~3, 457--517.

\bibitem[DdlLS06]{DLS}
A.~Delshams, R.~de~la Llave, and T.~M. Seara, \emph{A gearmetric mechanism for
  diffusion in {H}amiltonian systems overcoming the large gap problem:
  heuristics and rigorous verification on a model}, Mem. Amer. Math. Soc.
  \textbf{179} (2006), no.~844, viii+141.

\bibitem[Dou92]{douady}
R.~Douady, \emph{Applications du th\'eor\`eme des tores invariants}, Ph.D.
  thesis, Universit\'e de Paris 7, 1992, Th\`ese de troisi\'eme cycle.

\bibitem[Kor09]{koro}
A.~Koropecki, \emph{Aperiodic invariant continua for surface homeomorphisms},
  Mathematische Zeitschrift (2009), published online 23 Jun 2009
  (doi:10.1007/s00209-009-0565-0).

\bibitem[LC07]{lecalvez-drift}
P.~Le~Calvez, \emph{Drift orbits for families of twist maps of the annulus},
  Ergodic Theory Dynam. Systems \textbf{27} (2007), no.~3, 869--879.

\bibitem[Mat81]{mather-area}
J.~Mather, \emph{Invariant subsets of area-preserving homeomorphisms of
  surfaces}, Advances in Math. Suppl. Studies \textbf{7B} (1981), 531--561.

\bibitem[Mat04]{Ma}
\bysame, \emph{Arnol'd diffusion. {I}. {A}nnouncement of results}, J. Math.
  Sci. \textbf{124} (2004), no.~5, 5275--5289.

\bibitem[Moe02]{moeckel}
R.~Moeckel, \emph{Generic drift on {C}antor sets of annuli}, Celestial
  mechanics ({E}vanston, {IL}, 1999), Contemp. Math., vol. 292, Amer. Math.
  Soc., Providence, RI, 2002, pp.~163--171.

\bibitem[Mos65]{moser}
J.~Moser, \emph{On the volume elements on a manifold}, Transactions of the
  American Mathematical Society \textbf{120} (1965), no.~2, 286--294.

\bibitem[MS02]{MS}
J.-P. Marco and D.~Sauzin, \emph{Stability and instability for {G}evrey
  quasi-convex near-integrable {H}amiltonian systems}, Publ. Math. Inst. Hautes
  \'Etudes Sci. (2002), no.~96, 199--275 (2003).

\bibitem[Nas06]{tesis-meysam}
M.~Nassiri, \emph{Robustly transitive sets in nearly integrable hamiltonian
  systems}, Ph.D. thesis, IMPA, 2006.

\bibitem[NP08]{np}
M.~Nassiri and E.~Pujals, \emph{Robust transitivity in hamiltonian dynamics},
  (preprint), 2008.

\bibitem[Oli87]{oliveira-torus}
F.~Oliveira, \emph{On the generic existence of homoclinic points}, Ergodic
  Theory Dynam. Systems \textbf{7} (1987), no.~4, 567--595.

\bibitem[Pix82]{pixton}
D.~Pixton, \emph{Planar homoclinic points}, J. Differential Equations
  \textbf{44} (1982), no.~3, 365--382.

\bibitem[PS04]{pugh-shub}
C.~Pugh and M.~Shub, \emph{Stable ergodicity}, Bull. Amer. Math. Soc. (N.S.)
  \textbf{41} (2004), no.~1, 1--41, With an appendix by Alexander Starkov.

\bibitem[Rob70]{robinson}
R.~C. Robinson, \emph{Generic properties of conservative systems}, American
  Journal of Mathematics \textbf{93} (1970), no.~3, 562--603.

\bibitem[SYC91]{embedology}
T.~Sauer, J.~Yorke, and M.~Casdagli, \emph{Embedology}, Journal of Statistical
  Physics \textbf{65} (1991), no.~3/4, 579--616.

\bibitem[Xia98]{xia2}
Z.~Xia, \emph{Arnold diffusion: a variational construction}, Proceedings of the
  {I}nternational {C}ongress of {M}athematicians, {V}ol. {II} ({B}erlin, 1998),
  no. Extra Vol. II, 1998, pp.~867--877 (electronic).

\bibitem[Xia06]{xia-area}
\bysame, \emph{Area-preserving surface diffeomorphisms}, Communications in
  Mathematical Physics \textbf{263} (2006), 723--735.

\bibitem[Zeh77]{zehnder}
E.~Zehnder, \emph{Note on smoothing symplectic and volume preserving
  diffeomorphisms}, Geometry and topology (Proc. III Latin. Amer. School of
  Math, IMPA, Rio de Janeiro, 1976), Springer, 1977, Lecture Notes in Math,
  Vol. 597, pp.~828--854.

\end{thebibliography}

\end{document}